\newtheorem{thm}{Theorem}[section]
\newtheorem{cor}[thm]{Corollary}
\newtheorem{claim}[thm]{Claim}
\newtheorem{fact}[thm]{Fact}
\newtheorem{lemma}[thm]{Lemma}
\newtheorem{prop}[thm]{Proposition}
\theoremstyle{definition}
\newtheorem{definition}[thm]{Definition}
\newtheorem{remark}[thm]{Remark}
\def\rquotient#1#2{%
	\makeatletter
	\raise.3ex\hbox{$#1$}/\lower.3ex\hbox{$#2$}%
	\makeatother
}	
\newcommand{\subjclass}[2][2010]{%
	\let\@oldtitle\@title%
	\gdef\@title{\@oldtitle\footnotetext{#1 \emph{Mathematics subject classification.} #2}}%
}
\newcommand{\keywords}[1]{%
	\let\@@oldtitle\@title%
	\gdef\@title{\@@oldtitle\footnotetext{\emph{Key words and phrases.} #1.}}%
}
\newcommand{\Address}{{% additional braces for segregating \footnotesize
		\bigskip
		\small
		
		\textsc{Institut Montpellierain Alexander Grothendieck, 499-554 Rue du Truel, 34090 Montpellier, France.}\par\nopagebreak
		\textit{E-mail address}: \texttt{anthony.genevois@umontpellier.fr}
		
}}
\title{Lampligther groups, median spaces, and Hilbertian geometry}
\date{\today}
\author{Anthony Genevois}
\subjclass{Primary 20F65. Secondary 20F67.}
\keywords{Wreath products, median spaces, CAT(0) cube complexes, compression, Haagerup property, Kazhdan property (T)}
\begin{document}

\maketitle

\begin{abstract}
From any two median spaces $X,Y$, we construct a new median space $X \circledast Y$, referred to as the \emph{diadem product} of $X$ and $Y$, and we show that this construction is compatible with wreath products in the following sense: given two finitely generated groups $G,H$ and two (equivariant) coarse embeddings into median spaces $X,Y$, there exist a(n equivariant) coarse embedding $G\wr H \to X \circledast Y$. As an application, we prove that 
$$\alpha_1(G \wr H) \geq \min(\alpha_1(G),\alpha_1(H))/2 \text{ for all finitely generated groups $G,H$,}$$
where $\alpha_1(\cdot)$ denotes the $\ell^1$-compression. As an other consequence, we recover several well-known theorems related to the Hilbertian geometry of wreath products from a unified point of view: the characterisation of wreath products satisfying Kazhdan's property (T) or the Haagerup property, as well as their discrete versions (FW) and (PW).
\end{abstract}

\tableofcontents

\section{Introduction}

\noindent
Recall that, given two groups $G$ and $H$, the \emph{wreath product} $G \wr H$ is defined as the semidirect product $\left( \bigoplus_H G \right) \rtimes H$ where $H$ acts on the direct sum by permuting the coordinates. These groups are also called \emph{lamplighter groups}, a terminology coined by Jim Cannon (see \cite{MR1062874}). The family of lamplighter groups is well-known in group theory, and has been studied from various perspectives over the years. On the one hand, lamplighter groups have an easy and explicit definition, allowing an easy access to various properties and calculations. On the other hand, these groups are sufficiently exotic, i.e. sufficiently far away from most of the well-understood classes of groups exhibited in the literature, in order to exhibit interesting behaviours. The combination of these two observations probably explains the success of lamplighter groups, and why they are often used to produce counterexamples. 

\medskip \noindent
In this article, we are interested in the $\ell^1$-geometry of wreath products. How taking the wreath product of two finitely generated groups can affect the compatibility between the geometry of the group and the geometry of an $\ell^1$-space? In fact, because $\ell^1$-spaces are median spaces and that, conversely, median spaces isometrically embed into $\ell^1$-spaces \cite{medianviewpoint}, we can alternatively ask the previous question for median spaces. Recall that:

\begin{definition}
Let $X$ be a metric space. Given two points $x,y \in X$, the \emph{interval} between $x$ and $y$ is
$$I(x,y) = \{ z \in X \mid d(x,y)=d(x,z)+d(z,y) \}.$$
Given any three points $x,y,z \in X$, a point in the intersection $I(x,y) \cap I(y,z) \cap I(x,z)$ is a \emph{median point} of $x$, $y$ and $z$. The space $X$ is \emph{median} if any triple of points admits a unique median.
\end{definition}

\noindent
In this article, we use the point of view offered by median spaces. Our main goal is to transfer the wreath product between groups to an operation between median spaces that create another median space.

\begin{definition}
Let $X,Y$ be two median spaces and $1 \in X$ a basepoint. The \emph{diadem product} $(X,1)\circledast Y$  (or simply $X \circledast Y$) is the set of \emph{wreaths} $(C,\varphi)$, where 
\begin{itemize}
	\item $C$ is a convex subspace of $Y$ that is \emph{finitely generated} (i.e. the convex hull of finitely many points);
	\item $\varphi : Y \to X$ satisfies $\varphi(y)=1$ for all but finitely many $y \in Y$ (written $\varphi \in X^{(Y)}$), 
\end{itemize}
endowed with the metric $\delta$ defined as
$$((C_1,\varphi_1),(C_2,\varphi_2)) \mapsto 2 \cdot \mu(C_1 \cup C_2 \cup \varphi_1 \Delta \varphi_2)- \mu(C_1)- \mu(C_2)+ \sum\limits_{y \in Y} d(\varphi_1(y), \varphi_2(y))$$
where $\varphi_1 \Delta \varphi_2$ denotes the set of points where $\varphi_1,\varphi_2$ differ and where $\mu(\cdot)$ denotes the measure of the collection of hyperplanes crossing the subspace under consideration (see Section \ref{section:medianspaces}).
\end{definition}

\noindent
We refer to Section~\ref{section:warmup} for an illustration of diadem products in a simple case. Our definition of diadem products is inspired by \cite[Section 9]{Qm}, where, given two groups $G,H$ with $H$ acting on a median graph, we constructed an action of $G \wr H$ on a quasi-median graph and deduced estimations on the $\ell^2$-compression.

\medskip \noindent
The general idea is that, given two finitely generated groups $G,H$, to any two maps $\Phi,\Psi$ from our groups to median spaces $X,Y$ can be associated a new map $\Phi \wr \Psi : G \wr H \to X \circledast Y$ in such that a way the amount of geometry preserved by $\Phi \wr \Psi$ is directly related to the amount of geometry preserved by $\Phi$ and $\Psi$. We motivate this idea from two points of view.

\medskip \noindent
Our first point of view is purely geometric: the wreath product of two finitely generated groups that embed nicely in median spaces also embeds nicely in some median space. In fact, instead of finitely generated groups, we can express our results for arbitrary graphs thanks to the following definition:

\begin{definition}
Let $G,H$ be two graphs and $1 \in G$ a basepoint. The \emph{wreath product} $(G,1) \wr H$ (or simply $G \wr H$) is the graph whose vertices are the pairs $(\varphi,h)$ where $h \in H$ and where $\varphi : H \to G$ satisfies $\varphi(k)=1$ for all but finitely many $k \in H$ (written $\varphi  \in G^{(H)}$), and whose edges link two vertices $(\varphi_1,y_1), (\varphi_2,y_2)$ if either $\varphi_1=\varphi_2$ and $y_1,y_2$ are adjacent in $H$ or $y_1=y_2$ and $\varphi_1,\varphi_2$ only differ at $y_1=y_2$ with $\varphi(y_1),\varphi(y_2)$ adjacent in $G$. 
\end{definition}

\noindent
Observe that, given two groups $G,H$ and two generating sets $R \subset G, S \subset H$, we have
$$\mathrm{Cayl}(G \wr H, R\cup S)= (\mathrm{Cayl}(G,R),1) \wr \mathrm{Cayl}(H,S),$$ 
justifying our terminology.

\noindent
Our first application of diadem products is that the wreath product of two (uniformly locally finite) graphs that coarsely embed in $\ell^1$-spaces also embeds in some $\ell^1$-space, or equivalently:

\begin{thm}\label{thm:WreathCoarse}
Let $G,H$ be two graphs with $H$ uniformly locally finite. Then $G \wr H$ coarsely embeds in a Hilbert space if and only if so do $G,H$.
\end{thm}

\noindent
The property of being coarsely embeddable in some Hilbert space has been popularised by Yu in \cite{MR1728880}, where it is proved that a finitely generated group that coarsely embeds in some Hilbert space satisfies the famous Novikov conjecture. 

\medskip \noindent
Regarding Theorem \ref{thm:WreathCoarse}, it is natural to ask whether a control on the metric distortion is possible. Recall that, given a Lipschitz map $f : R \to S$ between two metric spaces, one says that $f$ \emph{has compression $\geq \alpha$} if there exists some constant $C >0$ such that
$$d(f(a),f(b)) \geq C \cdot d(a,b)^\alpha \text{ for all $a,b \in R$}.$$
The \emph{$\ell^p$-compression} of $R$, denote by $\alpha_p(R)$, is the supremum of the $\alpha$ such that there exists a Lipschitz map of compression $\geq \alpha$ from $R$ to an $\ell^p$-space. Roughly speaking, the $\ell^p$-compression of a metric space is a real number between zero and one that quantifies the compatibility between the geometry of the space and the geometry of an $\ell^p$-space. The first examples of finitely generated groups with $\ell^2$-compression in $(0,1)$, namely Thompson's group $F$ and the lamplighter group $\mathbb{Z} \wr \mathbb{Z}$, are exhibited in \cite{MR2271228}. Since then, $\ell^p$-compressions of wreath products have received a lot of attention (see for instance \cite{lamplighterSV, Haagerupwreath, MR2803851, MR2439557, MR2783928, MR2644886, MR2439428, MR4199729}). 

\medskip \noindent
A quantitative version of Theorem \ref{thm:WreathCoarse} leads to the following statement:

\begin{thm}\label{thm:Compression}
Let $G,H$ be two graphs with $H$ uniformly locally finite. Then $$\alpha_1(G \wr H) \geq \frac{1}{2} \cdot \min(\alpha_1(G),\alpha_1(H)).$$ 
\end{thm}

\noindent
This estimates improves the lower bound given by \cite[Theorem 1.1]{MR2644886} for $p=1$. We emphasize that having $\ell^p$-compression one does not imply that the metric space under consideration admits a biLipschitz embedding in some $\ell^p$-space. For instance, a finitely generated free group has $\ell^2$-compression one but it does not admit a biLipschitz embedding in some Hilbert space \cite{MR880292}. Several wreath products are known to admit biLipschitz embedding in $\ell^1$-spaces, such that $\mathbb{Z}_2 \wr \mathbb{Z}$ \cite{MR2439557} and $\mathbb{Z}_2 \wr \mathbb{F}$ \cite{Haagerupwreath} (see also \cite{GeometryLamp}), but the problem is difficult in general. For instance, in \cite{MR2783928}, the authors show that $\mathbb{Z}_2 \wr\mathbb{Z}^2$ has $\ell^1$-compression one but leave the existence of a biLipschitz embedding as an open question. As an application of our diadem products, we prove that:

\begin{thm}\label{thm:Hyperbolic}
Let $G,H$ be two graphs. Assume that $H$ is a uniformly locally finite median hyperbolic graph. If $G$ biLipschitz embeds into an $\ell^1$-space, then so does $G \wr H$.
\end{thm}

\noindent
For instance, the theorem applies to the groups $\mathbb{Z}_n \wr \mathbb{Z}$, $\mathbb{Z}^n \wr \mathbb{Z}$, $\mathbb{F}_n \wr \mathbb{Z}$, $\mathbb{Z}_n \wr\mathbb{F}_r$, $\mathbb{Z}^n \wr\mathbb{F}_r$ and $\mathbb{F}_n \wr \mathbb{F}_r$. In fact, in all these cases the median spaces are discrete, so our construction provides a biLipschitz embedding in an infinite Hamming cube $\{0,1\}^{(\mathbb{N})}$. 

\medskip \noindent
Our second point of view dynamical: the wreath product of two groups that act nicely on $\ell^1$-spaces also acts nicely on some $\ell^1$-space. Such results are obtained by noticing that, if two groups act on two median spaces, then their wreath product acts on the diadem product of the corresponding spaces. In view of the characterisation of Kazhdan's property (T) and a-T-menability provided by \cite{medianviewpoint}, we recover the two following known statements:

\begin{thm}\label{thm:TandHaagerup}
Let $G,H$ be two non-trivial discrete groups. 
\begin{itemize}
	\item \emph{\cite{measuredwalls}} $G\wr H$ has property (T) if and only if $H$ is finite and $G$ has property (T). 
	\item \emph{\cite{Haagerupwreath}} $G\wr H$ is a-T-menable if and only if so are $G$ and $H$.
\end{itemize}
\end{thm}

\noindent
Because a discrete group has property (T) if and only if it cannot act on a median space with unbounded orbits, a natural discrete analogue is the property (FW), asking that no action of the group on a median graph can have unbounded orbits. Similarly, being a-T-menable amounts to admitting a metrically proper action on a median space, and the corresponding discrete version of it, namely the property (PW), requires the existence of a metrically proper action on a median graph. By noticing that the diadem product of two median graphs produces a median graphs, we deduce a discrete analogue of Theorem \ref{thm:TandHaagerup}:

\begin{thm}\label{thm:FWandPW}
Let $G,H$ be two non-trivial groups. 
\begin{itemize}
	\item \emph{\cite{TandFWforWreath}} $G\wr H$ has property (FW) if and only if $H$ is finite and $G$ have property~(FW). 
	\item \emph{\cite{Haagerupwreath}} $G\wr H$ has property (PW) if and only if so do $G$ and $H$.
\end{itemize}
\end{thm}

\noindent
See also \cite{Qm} for another proof of the second point.

\paragraph{Acknowledgments.} I am grateful to Elia Fioravanti for interesting discussions about median spaces; to Victor Chepo\"{i}, for having indicated to me the reference \cite{vantheory}; and to Bruno Duchesne and J\'er\'emie Brieussel for their comments on a previous version of my manuscript.

\section{Warm up}\label{section:warmup}

\noindent
In this section, we sketch a proof of the fact that the wreath product $\mathbb{Z} \wr \mathbb{Z}^2$ acts metrically properly on a median graph, in order to motivate the definitions used in the next section. 

\medskip \noindent
An element of the wreath product $\mathbb{Z} \wr \mathbb{Z}^2$, thought of as a lamplighter group, can be described by an infinite grid whose vertices are labelled by integers, such that all but finitely many vertices are labelled by $0$, together with an arrow pointing to some vertex. See Figure \ref{figure1}. Formally, the labelled grid encodes the coordinate along $\bigoplus\limits_{p \in \mathbb{Z}^2} \mathbb{Z}$ and the arrow the coordinate along $\mathbb{Z}^2$. Moreover, $\mathbb{Z} \wr \mathbb{Z}^2$ has a natural generating set such that right-multiplying an element of $\mathbb{Z} \wr \mathbb{Z}^2$  by one of these generators corresponds to modifying the integer of the vertex where the arrow is (by adding $\pm 1$) or to moving the arrow to an adjacent vertex. 
\begin{figure}
\begin{center}
\includegraphics[trim={0 10cm 17cm 0},clip,scale=0.25]{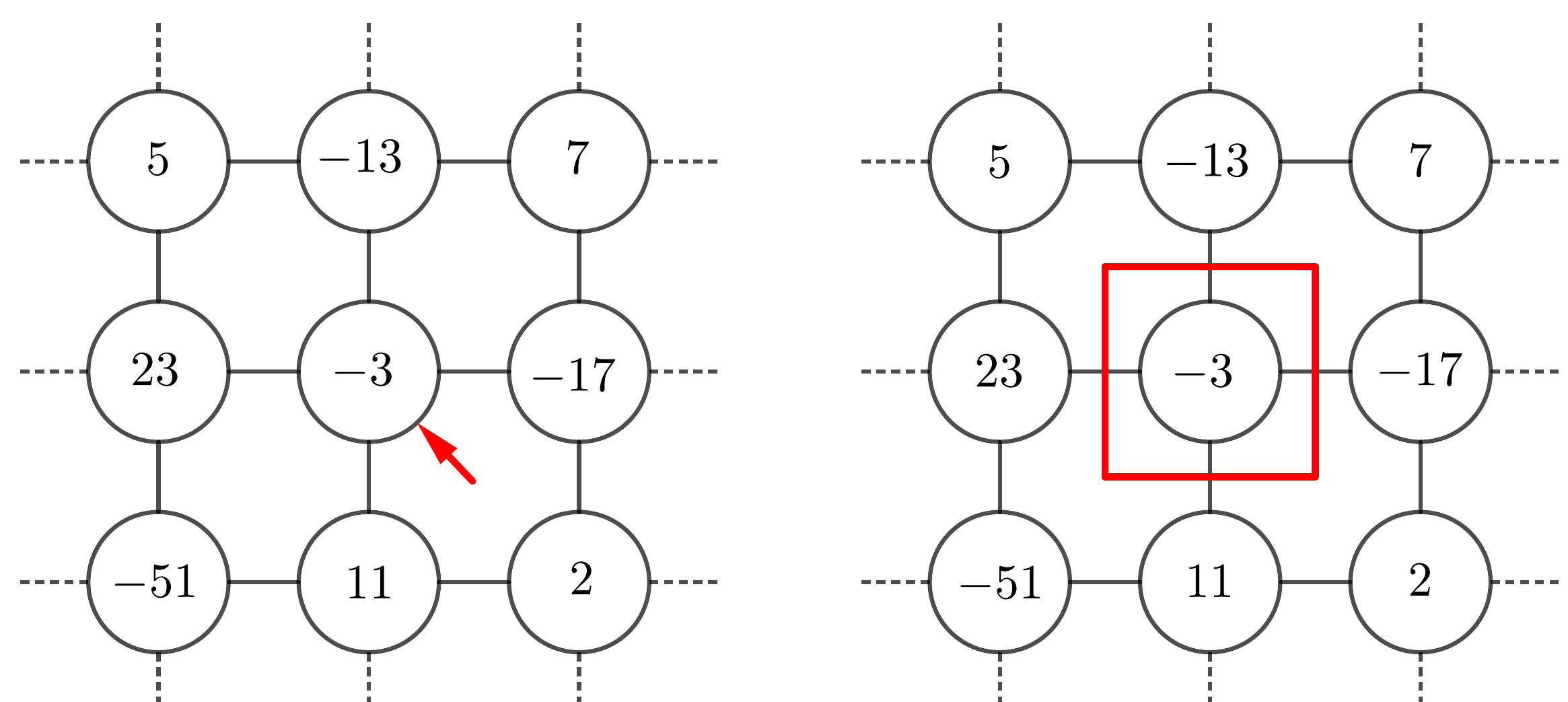}
\end{center}
\caption{Replacing the arrow with a rectangle.}
\label{figure1}
\end{figure}

\medskip \noindent
Essentially, our construction lies on the following idea: replace the arrow of the previous description with a rectangle (whose corners have their coordinates in $\frac{1}{2} \mathbb{Z}$) containing a single vertex of the grid (see Figure \ref{figure1}), and, instead of moving the arrow from one vertex to an adjacent vertex, move the sides of the rectangle independently. For instance, in order to move the rectangle to one vertex to an adjacent vertex, three moves are necessary; see Figure \ref{figure2}. More formally, we define a \emph{wreath} as the data $(R, \varphi)$ of a rectangle $R$ and a map $\varphi : \mathbb{Z}^2 \to \mathbb{Z}$ with finite support. Now, our elementary moves on a given wreath $(R, \varphi)$ are the followings: modify the integer of a vertex which belongs to (the interior of) $R$ by adding $\pm 1$, or translate one (and only one) side of $R$ by a unit vector. Among the wreaths, we recover the group $\mathbb{Z} \wr \mathbb{Z}^2$ as the wreaths whose rectangles contain a single vertex of the grid. Moreover, we have a natural action of $\mathbb{Z} \wr \mathbb{Z}^2$ on the set of wreaths extending the left-multiplication:
$$( p, \psi) \cdot (R, \varphi) = \left( R+p, \psi(\cdot)+ \varphi(\cdot -p) \right).$$
Now, define the \emph{graph of wreaths} $\mathfrak{W}$ (which will correspond to the diadem product of the two median graphs $\mathbb{Z}$ and $\mathbb{Z}^2$) as the graph whose vertices are the wreaths and whose edges link two wreaths such that one can be obtained from another by an elementary move. We claim that $\mathfrak{W}$ is a median graph on which $\mathbb{Z} \wr \mathbb{Z}^2$ acts metrically properly. 
\begin{figure}
\begin{center}
\includegraphics[trim={0 10cm 12cm 0},clip,scale=0.25]{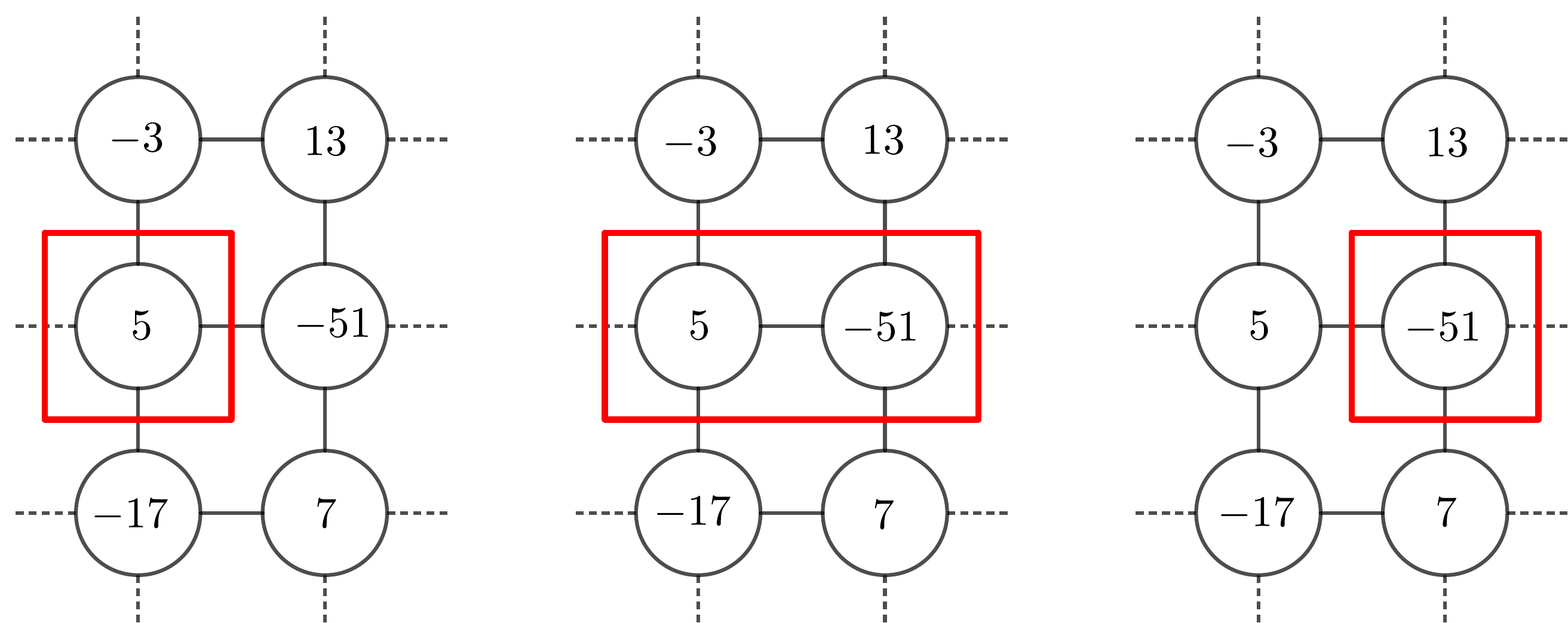}
\end{center}
\caption{Passing from a vertex to an adjacent vertex by elementary moves.}
\label{figure2}
\end{figure}

\medskip \noindent
In order to link two wreaths $(R_1, \varphi_1)$ and $(R_2, \varphi_2)$ by a path in $\mathfrak{W}$, we need to modify the integers at the points on which $\varphi_1,\varphi_2$ differ and to find a sequence of rectangles from $R_1$ to $R_2$ such that a rectangle is obtained from the previous one by an elementary move. Notice that, if we want to modify the integer at some point $p \in \mathbb{Z}^2$, then one of our rectangles must contain $p$ in its interior, and $|\varphi_1(p)- \varphi_2(p)|$ elementary moves will be needed to transform $\varphi_1(p)$ to $\varphi_2(p)$. Therefore, the distance between $(R_1, \varphi_1)$ and $(R_2, \varphi_2)$ in $\mathfrak{W}$ is equal to 
$$TC(R_1, \varphi_1 \Delta \varphi_2,R_2)+ \sum\limits_{p \in \mathbb{Z}^2} | \varphi_1(p)- \varphi_2(p)|,$$
where $\varphi_1 \Delta \varphi_2$ denotes the set of points on which $\varphi_1, \varphi_2$ differ and $TC(R_1,F,R_2)$ the minimal number of rectangles needed to connect $R_1$ to $R_2$ in such a way that any point of $F \subset \mathbb{Z}^2$ belongs to one of these rectangles. It is worth noticing that applying an elementary move to some rectangle $R$ amounts to adding or removing a hyperplane of $R$. With this idea in mind, it can be proved that
$$TC(R_1,F,R_2) = 2 \cdot \# \mathcal{H}(R_1 \cup R_2 \cup F) - \# \mathcal{H}(R_1) - \# \mathcal{H}(R_2),$$
where $\mathcal{H}(S)$ denotes the set of hyperplanes separating at least two vertices of $S$. The idea is essentially the following: if $J$ is a hyperplane separating two vertices of $R_1 \cup R_2 \cup F$, then in our sequence of rectangles from $R_1$ to $R_2$, we will need to add $J$ to one of these rectangles and next to remove it from another one, except if $J$ already crosses $R_1$ (so that we do not need to add it) or if it crosses $R_2$ (so that we do not need to remove it). See \cite[Section 9]{Qm} for more information. Thus, the distance between $(R_1, \varphi_1)$ and $(R_2, \varphi_2)$ in the graph of wreaths $\mathfrak{W}$ is equal to
$$2 \cdot \# \mathcal{H}(R_1 \cup R_2 \cup \varphi_1 \Delta \varphi_2) - \# \mathcal{H}(R_1) - \# \mathcal{H}(R_2)+ \sum\limits_{p \in \mathbb{Z}^2} | \varphi_1(p)- \varphi_2(p)|.$$
In the next sections, we will generalise these ideas to arbitrary median spaces.

\section{Diadem products of median spaces}

\subsection{Preliminaries on median spaces}\label{section:medianspaces}

\noindent
In this section, we give the preliminary material on median spaces which will be needed in the sequel. We refer to \cite{medianviewpoint} and references therein for more information. 

\begin{definition}
Let $X$ be a metric space. Given two points $x,y \in X$, the \emph{interval} between $x$ and $y$ is
$$I(x,y) = \{ z \in X \mid d(x,y)=d(x,z)+d(z,y) \}.$$
Given any three points $x,y,z \in X$, a point in the intersection $I(x,y) \cap I(y,z) \cap I(x,z)$ is a \emph{median point} of $x$, $y$ and $z$. The space $X$ is \emph{median} if any triple of points admits a unique median.
\end{definition}

\noindent
Important examples of median spaces are \emph{median graphs}, since it was proved independently in \cite{Roller, mediangraphs} that they are precisely the one-skeletons of CAT(0) cube complexes. In fact, median spaces can be thought of as a ``non-discrete'' generalisation of these complexes. In particular, the technology of \emph{hyperplanes} can be extended. 

\begin{definition}
Let $X$ be a median space. A subspace $Y \subset X$ is \emph{convex} if $I(x,y) \subset Y$ for every $x,y \in Y$. A \emph{halfspace} of $X$ is a convex subspace whose complement is convex as well. Finally, a \emph{hyperplane} of $X$ is a pair $\{D,D^c\}$ where $D$ is a halfspace.
\end{definition}

\noindent
In a median graph, the distance between any two vertices coincides with the number of hyperplanes separating them. In order to generalise this idea to median spaces, we need to introduce \emph{measured wallspaces}. 

\begin{definition}
Let $X$ be a set. A \emph{wall} $W$ is a partition $\{D,D^c\}$ of $X$ into two non empty subsets; $D$ and $D^c$ are referred to as the \emph{halfspaces} delimited by $W$. Two points $x,y \in X$ are \emph{separated} by a given wall $\{Y,Y^c \}$ if either $x \in Y$ and $y \in Y^c$, or $x \in Y^c$ and $y \in Y$.
\end{definition}

\noindent
The typical examples of walls we have in mind are hyperplanes in median spaces.

\begin{definition}
A \emph{measured wallspace} $(X, \mathcal{W}, \mathcal{B}, \mu)$ is the data of a set $X$, a collection of walls $\mathcal{W}$, a $\sigma$-algebra $\mathcal{B}$ of $\mathcal{W}$ and $\mu$ an associated measure, such that, for every points $x,y \in X$ the collection of walls $\mathcal{W}(x \mid y)$ separating $x$ and $y$ belongs to $\mathcal{B}$ and has finite $\mu$-measure. 
\end{definition}

\noindent
It is proved in \cite{medianviewpoint} that a median space, together with its collection of hyperplanes, can be naturally endowed with a structure of measured wallspace which is compatible with the initial metric. More precisely,

\begin{thm}
Let $(X,d)$ be a median space. There exist a $\sigma$-algebra $\mathcal{B}$ and a measure $\mu$ defined on the set of hyperplanes of $X$ such that, for every points $x,y \in X$, $\mathcal{W}(x \mid y)$ belongs to $\mathcal{B}$ and $\mu~\mathcal{W}(x \mid y)=d(x,y)$. 
\end{thm}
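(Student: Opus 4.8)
The plan is to construct the measure directly on the set $\mathfrak{H}$ of all halfspaces of $X$ (so that a hyperplane is the two-element set $\{D,D^c\}$, and the measure on hyperplanes is obtained by pushing forward along $D\mapsto\{D,D^c\}$, or equivalently by observing that a wall separating $x$ and $y$ contains exactly one halfspace from $\mathfrak{H}(x\mid y):=\{D\in\mathfrak{H}\mid x\in D,\ y\notin D\}$). Two structural facts drive the argument. First, \emph{additivity along medians}: if $m\in I(x,z)$ — in particular if $m$ is the median of a triple containing $x$ and $z$ — then $\mathfrak{H}(x\mid z)=\mathfrak{H}(x\mid m)\sqcup\mathfrak{H}(m\mid z)$ is a disjoint union, which follows at once from the convexity of halfspaces and of their complements. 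Second, "locally" $X$ looks like a finite CAT(0) cube complex, on which a weighted counting measure of separating hyperplanes literally computes $d$. The measure is then built as a finitely additive premeasure on a suitable algebra of subsets of $\mathfrak{H}$ and promoted to a genuine measure by a Carath\'eodory-type extension, the $\sigma$-additivity being forced by compactness.

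First I would establish the local picture. For a finite subset $F\subseteq X$ the median subalgebra $\langle F\rangle$ generated by $F$ is finite, hence is the vertex set of a finite CAT(0) cube complex whose combinatorial hyperplanes are the walls of $\langle F\rangle$. To each such hyperplane I assign the common $d$-length of its dual edges; this is well defined because opposite sides of a square in $\langle F\rangle$ have equal $d$-length, a short computation using that the four vertices, together with the relevant medians, satisfy interval identities such as $d(p,s)=d(p,q)+d(q,s)$. One then checks, by induction on combinatorial distance together with additivity along medians, that the resulting weighted counting measure $\mu_F$ on the hyperplanes of $\langle F\rangle$ satisfies $\mu_F\{\,\hat h\text{ separating }p,q\,\}=d(p,q)$ for all $p,q\in F$. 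Since every halfspace of $X$ restricts to a halfspace (possibly degenerate) of $\langle F\rangle$ and these restrictions are compatible as $F$ grows, the family $(\mu_F)$ is consistent and defines a finitely additive set function $\mu_0$ on the Boolean algebra $\mathcal{A}_0$ generated by the cylinders $\mathfrak{H}(x\mid y)$, with $\mu_0\,\mathfrak{H}(x\mid y)=d(x,y)$ for all $x,y\in X$.

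To promote $\mu_0$ to a measure I would realize $\mathfrak{H}$, together with the two degenerate halfspaces $\emptyset$ and $X$, as a subset $\overline{\mathfrak{H}}$ of $\{0,1\}^{X}$ with the product topology; it is closed there, since "$D$ is convex" and "$D^c$ is convex" are conjunctions of the closed conditions "$p,q\in D\Rightarrow r\in D$" taken over triples with $r\in I(p,q)$. Hence $\overline{\mathfrak{H}}$ is compact, and every cylinder $\mathfrak{H}(x\mid y)$, and therefore every element of $\mathcal{A}_0$, is clopen in it. Consequently any decreasing sequence in $\mathcal{A}_0$ with empty intersection is eventually empty, so $\mu_0$ is a premeasure and Carath\'eodory's extension theorem yields a measure $\mu$ on $\mathcal{B}:=\sigma(\mathcal{A}_0)$. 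Transporting $\mathcal{B}$ and $\mu$ to the set of walls via the two-to-one map $D\mapsto\{D,D^c\}$ gives the desired $\sigma$-algebra and measure: $\mathcal{W}(x\mid y)$ is a cylinder, hence lies in $\mathcal{B}$, and $\mu\,\mathcal{W}(x\mid y)=d(x,y)<\infty$, which also exhibits $(X,\mathcal{W},\mathcal{B},\mu)$ as a measured wallspace compatible with $d$.

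The main obstacle is the consistency and nonnegativity of $\mu_0$ across different finite subsets. The slogan "weight each hyperplane by the length of its dual edges" hides the fact that median subalgebras need not be metrically convex, so care is needed to relate the halfspaces — hence the hyperplanes — of $\langle F\rangle$ to those of $\langle F'\rangle$ and of $X$ itself; the right tool is the separation theorem asserting that two disjoint convex subsets of a median space (under a mild properness hypothesis) are separated by a halfspace, which is also what guarantees that there are enough halfspaces for $\mu_0\,\mathfrak{H}(x\mid y)$ to equal $d(x,y)$ rather than merely bound it from below. A comparatively minor point is verifying that $\mathcal{A}_0$ is genuinely an algebra and that its members are clopen in the chosen topology; and since the statement only claims existence, no uniqueness of $(\mathcal{B},\mu)$ needs to be addressed.
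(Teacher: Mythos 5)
The paper itself does not prove this theorem: it is imported from \cite{medianviewpoint} without proof, so your argument can only be compared with the literature rather than with an in-paper proof. That said, your overall architecture is sound and most of its steps do check out: the median hull $\langle F\rangle$ of a finite set is a finite median algebra (cf.\ \cite{Coarsemedian}), hence the vertex set of a finite CAT(0) cube complex; opposite sides of a square have equal $d$-length (the interval identities you invoke do give $d(a,b)=d(c,d)$), so weighting each hyperplane of $\langle F\rangle$ by the common length of its dual edges is legitimate; the induction along combinatorial geodesics works because betweenness in the subalgebra $\langle F\rangle$ implies metric betweenness in $X$; and the compactness of the space of (possibly degenerate) halfspaces inside $\{0,1\}^X$ does make every set in your algebra clopen, so the Carath\'eodory step is unproblematic.

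The genuine gap is the consistency step you wave at: for $\mu_0$ to be well defined and finitely additive on the algebra generated by the cylinders $\mathfrak{H}(x\mid y)$, you need that \emph{every} wall of \emph{every} finite subalgebra $\langle F\rangle$ is the trace of a wall of $X$. If some wall of $\langle F\rangle$ were not realized, it would carry positive weight while corresponding to an empty set of halfspaces of $X$, and the nonempty atoms inside $\mathfrak{H}(x\mid y)$ would sum to strictly less than $d(x,y)$; finite additivity would genuinely fail, so this is not a cosmetic remark. Your appeal to the separation theorem does not settle it as stated, because the two sides $h$ and $\langle F\rangle\setminus h$ of a wall of $\langle F\rangle$ are convex only in $\langle F\rangle$, not in $X$, and the separation theorem needs disjoint convex subsets of $X$; the whole content is to produce disjoint convex supersets (equivalently, to show the $X$-convex hulls of the two sides are disjoint), which you do not do. The repair is short but essential: pick an edge $ab$ of $\langle F\rangle$ dual to the given wall and use the gate projection $x\mapsto m(a,b,x)$ of $X$ onto $I(a,b)$, which is a median homomorphism; its fibres over $a$ and over $b$ are disjoint convex subsets of $X$ containing $h$ and $\langle F\rangle\setminus h$ respectively, and the separation theorem for median algebras (a purely order-theoretic Zorn argument, valid with no properness hypothesis -- so your hedge ``under a mild properness hypothesis'' should be dropped, since the statement concerns arbitrary median spaces) then yields a wall of $X$ inducing the given wall of $\langle F\rangle$. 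Together with the observation that a wall of $\langle F\rangle$ is determined by its trace on $F$ (so your atoms really biject with walls of $\langle F\rangle$), this closes the argument.
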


\noindent
Another useful tool in the study of median spaces is that it is possible to define \emph{projections} on some subspaces.

\begin{definition}
Let $X$ be a metric space and $Y \subset X$ a subspace. Given two points $x \in X$ and $p \in Y$, $p$ is a \emph{gate} for $x$ in $Y$ if $p \in I(x,y)$ for every $y \in Y$. If every point of $X$ admits a gate in $Y$, we say that $Y$ is \emph{gated}.  
\end{definition}

\noindent
Clearly, if it exists, a gate of a point $x$ is the unique point of the subspace which minimises the distance to $x$. In particular, for any gated subspace $Y$, it allows to define the \emph{projection} of any point $x \in X$ onto $Y$ as the unique gate of $x$ in $Y$.

\begin{lemma}\label{lem:proj1}
Let $X$ be a median space, $C \subset X$ a gated subspace and $x \in X$ a point. Any hyperplane separating $x$ from its projection onto $C$ separates $x$ from $C$.
\end{lemma}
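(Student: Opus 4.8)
The plan is to argue by contradiction, exploiting two pieces of structure already at hand: that the halfspaces delimited by a hyperplane are convex \emph{by definition}, and that a gate lies on every interval joining the base point to a point of the subspace.

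Write $p$ for the projection of $x$ onto $C$, that is, the gate of $x$ in $C$, and let $J=\{D,D^c\}$ be a hyperplane separating $x$ from $p$; up to exchanging $D$ and $D^c$ I may assume $x\in D$ and $p\in D^c$. Recall that $J$ separates $x$ from $C$ precisely when $C\subseteq D^c$, so I would assume for contradiction that there is some $y\in C$ with $y\in D$. On the one hand, $D$ is a halfspace, hence convex, so $x,y\in D$ forces $I(x,y)\subseteq D$. On the other hand, $p$ is the gate of $x$ in $C$ and $y\in C$, so $p\in I(x,y)$ by the very definition of a gate. Combining the two, $p\in D$, contradicting $p\in D^c$. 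Hence no such $y$ exists, $C\subseteq D^c$, and $J$ separates $x$ from $C$, as desired.

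I do not anticipate any real obstacle here; the only thing worth watching is to resist a measure-theoretic detour through the wallspace structure — comparing $\mu\,\mathcal{W}(x\mid p)$, $\mu\,\mathcal{W}(x\mid y)$ and $\mu\,\mathcal{W}(p\mid y)$ via $p\in I(x,y)$ would only yield that $\mu$-almost every hyperplane separating $x$ from $p$ separates $x$ from $C$, which is strictly weaker than the stated conclusion. Arguing directly from convexity of halfspaces avoids this entirely and keeps the proof to a couple of lines.
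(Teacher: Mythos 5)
Your proof is correct and coincides with the paper's own argument: both rest on the convexity of the halfspace containing $x$ together with the gate property $p\in I(x,y)$ for $y\in C$, the only difference being that you phrase it as a contradiction while the paper phrases it contrapositively (any $z$ in the halfspace of $x$ cannot lie in $C$). Nothing further is needed.
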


\begin{proof}
Let $x' \in C$ denote the projection of $x$ onto $C$, and let $\{D,D^c\}$ be a hyperplane separating $x$ and $x'$, say $x' \in D$ and $x \in D^c$. For any point $z \in D^c$, necessarily $I(x,z) \subset D^c$ by convexity. On the other hand, if $z \in C$, then $I(x,z) \cap D \neq \emptyset$ since $x' \in I(x,z)$. Therefore, $z \in D$. This proves that $C \subset D$, so that $\{D,D^c \}$ separates $x$ from $C$.
\end{proof}

\noindent
For instance, it is proved in \cite{medianviewpoint} that closed convex subspaces in complete median spaces are gated. In this paper, we are interested in the class of \emph{finitely generated} convex subspaces.

\begin{definition}
In a median space $X$, a convex subspace is \emph{finitely generated} if it is the convex hull of finitely many points. We denote by $\mathcal{F}(X)$ the collection of all the non empty finitely generated convex subspaces of $X$. 
\end{definition}

\noindent
Our main lemma about finitely generated convex subspaces is the following:

\begin{lemma}\label{lem:fgmainlem}
Let $X$ be a median space and $C_1,C_2 \in \mathcal{F}(X)$ two subspaces. There exist two points $x_1 \in C_1$ and $x_2 \in C_2$ such that 
$$\mathcal{W}(x_1 \mid x_2) = \mathcal{W}(C_1 \mid C_2) \ \text{and} \ d(x_1,x_2)=d(C_1,C_2).$$ 
Moreover, $x_1$ is a gate of $x_2$ in $C_1$ and similarly $x_2$ is a gate of $x_1$ in $C_2$. 
\end{lemma}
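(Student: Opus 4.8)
The plan is to first establish the statement for pairs of points, then bootstrap to finitely generated convex subspaces using an iteration on the number of generators. For the base case, recall that finitely generated convex subspaces are convex hulls of finite point sets, and in a (not necessarily complete) median space such subspaces are gated: this can either be quoted from the discussion preceding the lemma or proved directly, since the convex hull of finitely many points lives inside the interval-combinatorics generated by those points and gates exist there by an explicit median computation. Granting gatedness, I would define $x_1$ to be the projection (gate) of an arbitrary point of $C_2$ onto $C_1$, and then $x_2$ to be the gate of $x_1$ onto $C_2$. The key point is that once one lands in this configuration the pair is ``stable'': the gate of $x_2$ in $C_1$ is again $x_1$. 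This follows from Lemma \ref{lem:proj1}: any hyperplane separating $x_2$ from its $C_1$-projection separates $x_2$ from all of $C_1$, hence in particular separates $x_2$ from the original point we projected, which forces the $C_1$-projection of $x_2$ to coincide with $x_1$ by a counting/containment argument on the wall sets.

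Next I would identify $\mathcal{W}(x_1 \mid x_2)$ with $\mathcal{W}(C_1 \mid C_2)$, where $\mathcal{W}(C_1 \mid C_2)$ denotes the hyperplanes separating $C_1$ from $C_2$ (every point of $C_1$ from every point of $C_2$). The inclusion $\mathcal{W}(C_1 \mid C_2) \subset \mathcal{W}(x_1 \mid x_2)$ is immediate since $x_1 \in C_1$ and $x_2 \in C_2$. For the reverse inclusion, take a hyperplane $\{D, D^c\}$ separating $x_1$ and $x_2$, say $x_1 \in D$, $x_2 \in D^c$. By Lemma \ref{lem:proj1} applied with $C = C_1$ and the point $x_2$ (whose $C_1$-projection is $x_1$), this hyperplane separates $x_2$ from all of $C_1$, so $C_1 \subset D$. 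Symmetrically, applying Lemma \ref{lem:proj1} with $C = C_2$ and the point $x_1$ (whose $C_2$-projection is $x_2$), the hyperplane separates $x_1$ from all of $C_2$, so $C_2 \subset D^c$. Hence $\{D, D^c\}$ separates $C_1$ from $C_2$, giving the equality of wall sets; the distance equality $d(x_1,x_2) = d(C_1,C_2)$ then follows from the measured wallspace structure ($\mu \mathcal{W}(x \mid y) = d(x,y)$), since $\mathcal{W}(x_1 \mid x_2) = \mathcal{W}(C_1 \mid C_2) \subset \mathcal{W}(a \mid b)$ for all $a \in C_1$, $b \in C_2$, whence $d(x_1,x_2) \le d(a,b)$, and $d(x_1,x_2) \ge d(C_1,C_2)$ trivially.

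The step I expect to be the main obstacle is the ``stability'' claim that double projection stabilises, i.e. that the gate of $x_2$ in $C_1$ is exactly $x_1$ rather than merely some point of $C_1$ no farther from $x_2$. The clean way to see this is via wall sets: if $x_1'$ is the $C_1$-projection of $x_2$, then by Lemma \ref{lem:proj1} every hyperplane in $\mathcal{W}(x_2 \mid x_1')$ separates $x_2$ from $C_1$, in particular from $x_1$, so $\mathcal{W}(x_2 \mid x_1') \subset \mathcal{W}(x_2 \mid x_1)$; but $x_1 \in C_1$ and $x_1' = \mathrm{proj}_{C_1}(x_2)$ forces $d(x_2,x_1') \le d(x_2,x_1)$, i.e. $\mu \mathcal{W}(x_2 \mid x_1') \le \mu \mathcal{W}(x_2 \mid x_1)$, and combined with the inclusion and the fact that $x_1'$ is on a geodesic from $x_2$ to $x_1$ (gate property) one gets $x_1' = x_1$. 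One should double-check that no completeness hypothesis sneaks in here — the whole argument only uses gatedness of finitely generated convex subspaces and the hyperplane/measure machinery of the preceding section, both available without completeness — and that the measure-theoretic subtractions are legitimate given that all wall sets involved have finite measure.
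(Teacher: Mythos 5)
There is a genuine gap, and it sits exactly where you placed your trust: the gatedness of finitely generated convex subspaces. In the paper this gatedness is \emph{not} available before the lemma --- it is Corollary 2.11, deduced by applying the lemma itself to $\{x\}$ and $C$ --- and the ``discussion preceding the lemma'' only records that \emph{closed} convex subspaces of \emph{complete} median spaces are gated, whereas here $X$ is not assumed complete and the convex hull of a finite set has no reason to be closed. Your parenthetical ``proved directly \dots by an explicit median computation'' is where all the content of the lemma lives: in a non-complete median space a nearest point in a convex subspace need not even exist, and the paper manufactures existence by a different device, namely Bowditch's result (\cite[Lemma 4.2]{Coarsemedian}) that the median hull of a finite set is finite; it then picks $x_1,x_2$ minimising the distance over the finite set $F\cap C_1$, $F\cap C_2$, where $F$ is the median hull of the two generating sets (shown to lie in $C_1\cup C_2$), and proves the gate and wall-set properties from that minimality. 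So as it stands your plan is circular: it presupposes precisely the existence statement the lemma is designed to produce, and no independent proof of gatedness is supplied.

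Even granting gatedness, your ``stability'' step is not closed by the inferences you list. From $\mathcal{W}(x_2\mid x_1')\subset\mathcal{W}(x_2\mid x_1)$, $d(x_2,x_1')\le d(x_2,x_1)$ and $x_1'\in I(x_2,x_1)$ one cannot conclude $x_1'=x_1$: these three facts hold for \emph{every} $x_1\in C_1$ with $x_2=\mathrm{proj}_{C_2}(x_1)$, and for arbitrary $x_1$ the conclusion is false --- in the $\ell^1$-plane take $C_1=\{0\}\times[0,1]$, $C_2=[1,2]\times\{0\}$ and $x_1=(0,1)$; then $x_2=(1,0)$ and its gate in $C_1$ is $(0,0)\neq x_1$. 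The claim is true for your particular $x_1=\mathrm{proj}_{C_1}(z)$ with $z\in C_2$, but any proof must actually use $z$: combining $x_2\in I(x_1,z)$ (gate property of $x_2$) with $x_1\in I(z,x_1')$ (gate property of $x_1$, applied to the point $x_1'\in C_1$), a hyperplane separating $x_1$ from $x_1'$ would force $z$, hence $x_2$, onto the side of $x_1$, while Lemma \ref{lem:proj1} puts $x_2$ on the side of $x_1'$; then $d(x_1,x_1')=\mu\,\mathcal{W}(x_1\mid x_1')=0$. With that repair (and with gatedness established, e.g.\ by the paper's finite-median-hull argument), the rest of your proposal --- the two applications of Lemma \ref{lem:proj1} to identify $\mathcal{W}(x_1\mid x_2)$ with $\mathcal{W}(C_1\mid C_2)$ and the measure computation giving $d(x_1,x_2)=d(C_1,C_2)$ --- is correct, and would give a projection-based variant of the paper's minimisation argument; but as written the two key steps above are missing.
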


\begin{proof}
For any subset $F \subset X$, define $M(F)= \{ m(x,y,z) \mid x,y,z \in F \}$, and by induction 
$$\left\{ \begin{array}{l} M^0(F)=F \\ M^{n+1}(F)=M(M^n(F)) \ \text{for every $n \geq 0$} \end{array} \right. .$$ 
By construction, $\bigcup\limits_{n \geq 0} M^n(F)$ coincides with the \emph{median hull} of $F$, ie., the smallest subset of $X$ containing $F$ which is stable under the median operation. Moreover, because the median hull of a finite set turns out to be finite according to \cite[Lemma 6.20]{vantheory}, there exists some $N \geq 0$ such that $M^n(F)= \bigcup\limits_{n \geq 0} M^n(F)$ for every $n \geq N$. 

\medskip \noindent
Let $F_1,F_2 \subset X$ be two finite subsets such that $C_1$ and $C_2$ are the convex hulls of $F_1$ and $F_2$ respectively. Let $F$ denote the median hull of $F_1 \cup F_2$; according to our previous observation, $F$ is finite. We claim that $F \subset C_1 \cup C_2$. It is clear that $M^0(F_1 \cup F_2) \subset C_1 \cup C_2$; and if $M^n(F_1 \cup F_2) \subset C_1 \cup C_2$ for some $n \geq 0$, then any point $p \in M^{n+1}(F_1 \cup F_2)$ can be written as $p=m(x,y,z)$ for some $x,y,z \in C_1 \cup C_2$, say with $x,y \in C_1$, so that $p \in I(x,y) \subset C_1$. Thus, it follows by induction that $M^n(F_1 \cup F_1) \subset C_1 \cup C_2$ for every $n \geq 0$, hence $F \subset C_1 \cup C_2$. We have proved more generally that

\begin{fact}\label{fact:unionfg}
If $C_1$ and $C_2$ are the convex hulls of two subsets $F_1$ and $F_2$ respectively, then the median hull of $F_1 \cup F_2$ is included into $C_1 \cup C_2$. 
\end{fact}

\noindent
Now, fix two points $x_1 \in F \cap C_1$ and $x_2 \in F \cap C_2$ satisfying
$$d(x_1,x_2)= \min \left\{ d(x,y) \mid x \in F \cap C_1, y \in F \cap C_2) \right\}.$$
Let $z \in F \cap C_1$ be a point. Because the median point $m$ of $x_1$, $z$ and $x_2$ necessarily belongs to $F \cap C_1$ and that $d(x_1,x_2)=d(x_1,m)+d(m,x_2)$, we deduce that $m=x_1$, so that $x_1 \in I(z,x_2)$. As a consequence, any hyperplane separating $x_1$ and $x_2$ must separate $z$ and $x_2$. Indeed, if $\{ D,D^c \}$ is such a hyperplane, say with $x_2 \in D$ and $x_1 \in D^c$, and if $z$ belongs to $D$, then it follows that $x_1 \in I(z,x_2) \subset D$ by convexity of $D$, which is absurd. Thus, we have proved that any hyperplane separating $x_1$ and $x_2$ separates $F \cap C_1$ and $x_2$. By symmetry, our argument also implies that any hyperplane separating $x_1$ and $x_2$ separates $x_1$ and $F \cap C_2$. Therefore, $\mathcal{W}(x_1 \mid x_2) \subset \mathcal{W}(F_1 \mid F_2)$. The reverse inclusion being clear, it follows that $\mathcal{W}(x_1 \mid x_2) = \mathcal{W}(F_1 \mid F_2)$. From the inequalities
$$d(C_1,C_2) \leq d(x_1,x_2)= \mu~\mathcal{W}(x_1 \mid x_2) = \mu~\mathcal{W}(F_1 \mid F_2) \leq d(C_1,C_2),$$
we conclude that $d(x_1,x_2)=d(C_1,C_2)$. 

\medskip \noindent
Now, we want to prove that $x_2$ is a gate of $x_1$ in $C_2$. So fix a point $w \in C_2$. If $J$ is a hyperplane separating $x_2$ and $w$, then $J$ does not separate $x_1$ and $x_2$, because we know that the hyperplanes separating $x_1$ and $x_2$ are precisely the hyperplanes separating $C_1$ and $C_2$, which do not intersect $C_2$ in particular. Equivalently, $\mathcal{W}(x_2 \mid w) \cap \mathcal{W}(x_1,x_2)= \emptyset$. As a consequence, $\mathcal{W}(x_2 \mid w) \subset \mathcal{W}(x_1 \mid w)$. Because any hyperplane separarating $x_1$ and $x_2$ must separate $C_1$ and $C_2$, and a fortiori $x_1$ and $w$, it follows that
$$\mathcal{W}(x_1 \mid w) = \mathcal{W}(x_1 \mid x_2) \sqcup \mathcal{W}(x_2 \mid w),$$
hence $d(x_1,w)=d(x_1,x_2)+d(x_2,w)$. Thus, we have proved that $x_2$ is a gate of $x_1$ in $C_2$. A symmetric argument proves that $x_1$ is a gate of $x_2$ in $C_1$. 
\end{proof}

\noindent
As a consequence of Lemma \ref{lem:fgmainlem}, it follows that finitely generated convex subspaces are gated, so that it will be possible to project points on such subspaces.

\begin{cor}
In a median space, any finitely generated convex subspace is gated.
\end{cor}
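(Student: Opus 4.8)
The plan is to reduce the statement entirely to Lemma~\ref{lem:fgmainlem}, using the trivial observation that a single point is already a finitely generated convex subspace. Let $X$ be a median space, let $C \in \mathcal{F}(X)$, and fix an arbitrary point $x \in X$; the goal is to produce a gate for $x$ in $C$, after which $C$ is gated by definition.

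First I would note that the singleton $\{x\}$ is convex, since its only interval is $I(x,x) = \{x\}$, and it is the convex hull of the finite set $\{x\}$, so $\{x\} \in \mathcal{F}(X)$. Hence Lemma~\ref{lem:fgmainlem} applies to the pair $C_1 = \{x\}$ and $C_2 = C$: it furnishes points $x_1 \in \{x\}$ and $x_2 \in C$ — so that necessarily $x_1 = x$ — together with the assertion that $x_2$ is a gate of $x_1 = x$ in $C_2 = C$. Since $x$ was arbitrary, every point of $X$ admits a gate in $C$, which is precisely the claim.

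I do not expect any real obstacle here: all the work has already been done in Lemma~\ref{lem:fgmainlem}, and the only point to verify is the immediate fact that singletons belong to $\mathcal{F}(X)$. Should one prefer not to quote the ``gate'' clause of Lemma~\ref{lem:fgmainlem} directly, one could instead combine the equality $\mathcal{W}(x_1 \mid x_2) = \mathcal{W}(C_1 \mid C_2)$ with Lemma~\ref{lem:proj1} and a hyperplane-counting argument (checking $d(x,y) = \mu\,\mathcal{W}(x \mid x_2) + \mu\,\mathcal{W}(x_2 \mid y)$ for each $y \in C$, hence $x_2 \in I(x,y)$), but this detour is unnecessary since Lemma~\ref{lem:fgmainlem} already records exactly what is needed.
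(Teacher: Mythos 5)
Your proposal is correct and is exactly the paper's argument: apply Lemma~\ref{lem:fgmainlem} to the pair $\{x\}$ and $C$, with the only added (and harmless) detail being the explicit check that a singleton lies in $\mathcal{F}(X)$.
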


\begin{proof}
Let $X$ be a median space, $C \in \mathcal{F}(X)$ some subspace and $x \in X$ some point. Applying Lemma \ref{lem:fgmainlem} to $\{x\}$ and $C$ provides the conclusion.
\end{proof}

\noindent
It is known that, in median spaces, any two disjoint convex subspaces are separated by at least one hyperplane. Another consequence of Lemma \ref{lem:fgmainlem} is that, if these two subspaces are moreover finitely generated, then the collection of the hyperplanes separating them is measurable and has positive measure.

\begin{cor}\label{cor:proj2}
Let $X$ be a median graph and $C_1, C_2 \in \mathcal{F}(X)$ two subspaces. If $C_1$ and $C_2$ are disjoint, then $\mu~\mathcal{W}(C_1 \mid C_2)>0$. 
\end{cor}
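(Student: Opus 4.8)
The plan is to apply Lemma \ref{lem:fgmainlem} directly to the pair $C_1, C_2$. This produces points $x_1 \in C_1$ and $x_2 \in C_2$ with $\mathcal{W}(x_1 \mid x_2) = \mathcal{W}(C_1 \mid C_2)$ and $d(x_1,x_2) = d(C_1,C_2)$. The first equality already tells us that $\mathcal{W}(C_1 \mid C_2)$ is measurable, since $\mathcal{W}(x_1 \mid x_2)$ is measurable by the definition of the measured wallspace structure on a median space; and its measure equals $\mu~\mathcal{W}(x_1 \mid x_2) = d(x_1,x_2)$. So it suffices to show $d(x_1,x_2) > 0$, i.e. $x_1 \neq x_2$.

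Suppose instead that $x_1 = x_2$. Then this common point lies in $C_1 \cap C_2$, contradicting the hypothesis that $C_1$ and $C_2$ are disjoint. Hence $d(C_1,C_2) = d(x_1,x_2) > 0$, and therefore $\mu~\mathcal{W}(C_1 \mid C_2) = d(C_1,C_2) > 0$.

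I expect the only subtlety to be bookkeeping: making sure that the conclusion $\mathcal{W}(C_1 \mid C_2) = \mathcal{W}(x_1 \mid x_2)$ from Lemma \ref{lem:fgmainlem} is invoked correctly so that measurability of $\mathcal{W}(C_1 \mid C_2)$ comes for free, rather than needing a separate argument. No further estimates are required.

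(One may also note, for completeness, that in the statement ``median graph'' should read ``median space''; the argument above uses only the median space structure and Lemma \ref{lem:fgmainlem}.)
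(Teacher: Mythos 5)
Your proposal is correct and matches the paper's own proof: both apply Lemma \ref{lem:fgmainlem} to $C_1$ and $C_2$, note that disjointness forces $x_1 \neq x_2$, and conclude $\mu~\mathcal{W}(C_1 \mid C_2) = d(x_1,x_2) > 0$. Your remark that ``median graph'' should read ``median space'' in the statement is also a fair observation.
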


\begin{proof}
Let $x_1 \in C_1$ and $x_2 \in C_2$ be the two points given by Lemma \ref{lem:fgmainlem}. Notice that, because $C_1$ and $C_2$ are disjoint, necessarily $x_1 \neq x_2$. We have
$$\mu~\mathcal{W}(C_1 \mid C_2)=\mu~\mathcal{W}(x_1 \mid x_2)=d(x_1,x_2)>0,$$
which proves our corollary.
\end{proof}

\noindent
Finally, we conclude this section by noticing that being finitely generated is stable under intersection.

\begin{lemma}\label{lem:fginter}
Let $X$ be a median space and $C_1,C_2 \in \mathcal{F}(X)$ two subspaces. The intersection $C_1 \cap C_2$ is finitely generated. 
\end{lemma}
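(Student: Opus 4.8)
The plan is to identify $C_1 \cap C_2$ with the image of $C_2$ under the gate projection onto $C_1$, and then to push a finite generating set of $C_2$ through this projection. We may assume $C_1 \cap C_2 \neq \emptyset$, since otherwise there is nothing to prove.

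The one ingredient that is not purely formal, and which I expect to be the main obstacle, is the following betweenness-preservation property of gate projections: if $Y$ is a gated subspace of a median space and $\pi$ denotes the gate projection onto $Y$, then $\pi(I(a,b)) \subseteq I(\pi(a),\pi(b))$ for all $a,b$. This is standard, but I would deduce it from Lemma \ref{lem:proj1}. Suppose $z \in I(a,b)$ while some hyperplane $\{D,D^c\}$ separates $\pi(z)$ from both $\pi(a)$ and $\pi(b)$ (such a hyperplane exists as soon as $\pi(z) \notin I(\pi(a),\pi(b))$, by standard facts about hyperplanes in median spaces: consider a hyperplane separating $\pi(z)$ from the median $m(\pi(a),\pi(z),\pi(b))$ and use convexity of halfspaces). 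If this hyperplane separated $z$ from $\pi(z)$, then by Lemma \ref{lem:proj1} it would separate $z$ from all of $Y$, putting $\pi(a),\pi(b) \in Y$ on one side and $\pi(z) \in Y$ on the other --- impossible. If it separated $a$ from $\pi(a)$, then Lemma \ref{lem:proj1} would put all of $Y$, hence $\pi(z)$, on the side of $\pi(a)$ --- again impossible; similarly for $b$. Hence $z$ lies on the side of $\pi(z)$, while $a$ and $b$ lie on the opposite side (that of $\pi(a)$ and $\pi(b)$); but then convexity of the halfspace containing $a$ and $b$ forces $I(a,b)$, and in particular $z$, into that halfspace --- a contradiction. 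The point of this step is its immediate consequence: $\pi^{-1}(C)$ is convex whenever $C$ is convex.

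Now let $\pi \colon X \to C_1$ be the gate projection onto $C_1$ (which is gated, being finitely generated, by the corollary above). I claim $\pi(C_2) = C_1 \cap C_2$. The inclusion $\supseteq$ holds because $\pi$ fixes $C_1$, hence fixes $C_1 \cap C_2$, pointwise. For $\subseteq$, fix $r \in C_1 \cap C_2$; for any $q \in C_2$ the gate property gives $\pi(q) \in I(q,r)$, and $I(q,r) \subseteq C_2$ by convexity of $C_2$, so $\pi(q) \in C_1 \cap C_2$.

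Finally, write $C_2 = \mathrm{conv}(F_2)$ with $F_2$ finite. Then $\pi(F_2) \subseteq \pi(C_2) = C_1 \cap C_2$, which is convex, so $D := \mathrm{conv}(\pi(F_2))$ is a finitely generated convex subspace contained in $C_1 \cap C_2$. Conversely, $\pi^{-1}(D)$ is convex by the first step and contains $F_2$, hence contains $\mathrm{conv}(F_2) = C_2$; applying $\pi$ gives $C_1 \cap C_2 = \pi(C_2) \subseteq D$. Therefore $C_1 \cap C_2 = D = \mathrm{conv}(\pi(F_2))$ is finitely generated. As a consistency check, running the same argument with the roles of $C_1$ and $C_2$ exchanged exhibits $C_1 \cap C_2$ as the convex hull of the projections onto $C_2$ of a finite generating set of $C_1$.
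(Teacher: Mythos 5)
Your proof is correct, but it takes a genuinely different route from the paper's. The paper stays inside the circle of ideas of Lemma \ref{lem:fgmainlem}: writing $C_i$ as the convex hull of a finite set $F_i$, it combines Fact \ref{fact:unionfg} with the finiteness of median hulls of finite sets (\cite[Lemma 4.2]{Coarsemedian}) to obtain a finite median-closed set $F \subset C_1 \cup C_2$, and then realises $C_1 \cap C_2$ as the convex hull of the finite set $F \cap C_1 \cap C_2$. You instead use the gate projection $\pi$ onto $C_1$ (legitimately available, since the corollary to Lemma \ref{lem:fgmainlem} precedes Lemma \ref{lem:fginter}), prove via Lemma \ref{lem:proj1} that $\pi$ preserves betweenness and hence pulls back convex sets to convex sets, and deduce $C_1 \cap C_2 = \pi(C_2) = \mathrm{conv}(\pi(F_2))$. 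All the steps check out: the separating hyperplane between two distinct points that you need does exist in this setting (for instance because $\mu\,\mathcal{W}(x \mid y) = d(x,y) > 0$ by the theorem quoted from \cite{medianviewpoint}), your three-case analysis ruling out a wall separating $\pi(z)$ from $\{\pi(a),\pi(b)\}$ is sound, the sandwich $\mathrm{conv}(\pi(F_2)) \subseteq C_1 \cap C_2 = \pi(C_2) \subseteq \mathrm{conv}(\pi(F_2))$ is correct, and the empty-intersection disclaimer is harmless since the lemma is only applied to nonempty intersections. As for what each approach buys: the paper's argument is essentially a short corollary of machinery it has already built (Fact \ref{fact:unionfg} plus Bowditch's finiteness lemma), whereas yours makes no new appeal to that finiteness result, produces an explicit finite generating set (the gates in $C_1$ of the generators of $C_2$), and establishes along the way a reusable fact of independent interest, namely that gate projections onto finitely generated convex subspaces of a median space preserve intervals, hence are median morphisms.
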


\begin{proof}
Let $F_1,F_2 \subset X$ be two finite subsets such that $C_1$ and $C_2$ are the convex hulls of $F_1$ and $F_2$ respectively. According to Fact \ref{fact:unionfg}, the median hull $F$ of $F_1 \cup F_2$ is included into $C_1 \cup C_2$. Let $Q$ denote the convex hull of $F \cap C_1 \cap C_2$. Notice that, because the convex hull of $F$ contains $C_1 \cup C_2$, necessarily $C_1 \cap C_2 \subset Q$. The reverse inclusion being clear, it follows that $Q= C_1 \cap C_2$. Thus, $C_1 \cap C_2$ is the convex hull of $F$, which is finite according to \cite[Lemma 6.20]{vantheory}. A fortiori, $C_1 \cap C_2$ is finitely generated. 
\end{proof}

\subsection{The space of finitely generated convex subspaces}\label{section:fgconvex}

\noindent
Recall that, given a median space, a convex subspace is \emph{finitely generated} if it is the convex hull of finitely many points of $X$. Notice that, if $C$ is such a subspace, then the set $\mathcal{H}(C)$ of the hyperplanes intersecting $C$ is measurable and has finite measure. Indeed, if $C$ is the convex hull of some finite set $\{ x_1, \ldots, x_n\}$, then $\displaystyle \mathcal{H}(C)= \bigcup\limits_{1 \leq i < j \leq n} \mathcal{W}(x_i \mid x_j)$ and $\displaystyle\mu(\mathcal{H}(C)) \leq \sum\limits_{1 \leq i < j \leq n} d(x_i,x_j)$. The goal of this section is to exploit this observation in order to define a median metric on the set of finitely generated convex subspaces of a given median space.

\medskip \noindent
In the sequel, we will use the following notation. Fix a median space $X$. For any subset $F \subset X$, we denote by $\mathcal{H}(F)$ the set of the hyperplanes separating two points of $F$; alternatively, this is also the set of the hyperplanes intersecting the convex hull of $F$. If $A_1, \ldots, A_n \subset X$ are subsets such that the convex hull of $A_1 \cup \cdots \cup A_n$ is finitely generated, we denote by $\mu(A_1 \cup \cdots \cup A _n)$ the measure of $\mathcal{H}(A_1 \cup \cdots \cup A_n)$. 

\begin{definition}
Given a median space $X$, we denote by $\mathcal{F}(X)$ the set of non empty finitely generated convex subspaces of $X$, which we equip with the map $d : \mathcal{F}(X) \times \mathcal{F}(X) \to \mathbb{R}_+$ defined by
$$d : (C_1,C_2) \mapsto 2 \cdot \mu(C_1 \cup C_2)- \mu(C_1)- \mu(C_2).$$
\end{definition}

\noindent
The rest of the section is dedicated to the proof of the following statement.

\begin{prop}\label{prop:Fmedian}
$(\mathcal{F}(X),d)$ is a median space.
\end{prop}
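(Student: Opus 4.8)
The plan is to verify the median-space axioms for $(\mathcal{F}(X),d)$ directly, with the embedding into measures / the wallspace structure as the guiding tool. First I would check that $d$ is a metric. Symmetry is obvious. For $d(C_1,C_2)=0 \Rightarrow C_1=C_2$, note that $d(C_1,C_2) = \mu(\mathcal{H}(C_1\cup C_2)) - \mu(\mathcal{H}(C_1)) + \mu(\mathcal{H}(C_1\cup C_2)) - \mu(\mathcal{H}(C_2))$, i.e.\ $d(C_1,C_2) = \mu\left(\mathcal{H}(C_1\cup C_2)\setminus \mathcal{H}(C_1)\right) + \mu\left(\mathcal{H}(C_1\cup C_2)\setminus\mathcal{H}(C_2)\right)$, and the first term is $\mu\,\mathcal{W}(C_1\mid C_2)$-related: more precisely a hyperplane in $\mathcal{H}(C_1\cup C_2)$ but not $\mathcal{H}(C_1)$ is one that misses $C_1$ but meets $C_1\cup C_2$, hence meets $C_2$ and separates $C_1$ from a point of $C_2$; so if $d(C_1,C_2)=0$ then $\mu\,\mathcal{W}(C_1\mid C_2)=0$, which by Corollary~\ref{cor:proj2} forces $C_1\cap C_2\neq\emptyset$. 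One then has to upgrade ``not disjoint'' to ``equal'': if $C_1 \subsetneq C_1\cup_{\mathrm{hull}} C_2$ properly, pick a point of $C_2\setminus C_1$; since $C_1$ is gated, its gate $p$ for that point lies in $C_1$, some hyperplane separates the point from $p$, and by Lemma~\ref{lem:proj1} that hyperplane separates the point from all of $C_1$, hence lies in $\mathcal{H}(C_1\cup C_2)\setminus\mathcal{H}(C_1)$ and has positive measure (it separates two points of $C_1\cup C_2$, apply Corollary~\ref{cor:proj2} to $\{$point$\}$ and $C_1$), contradiction. So $C_2\subseteq C_1$ and symmetrically $C_1\subseteq C_2$.

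For the triangle inequality I would reformulate $d$ using the symmetric-difference idea. Writing $\mathcal{H}_i = \mathcal{H}(C_i)$ and noting $\mathcal{H}(C_i\cup C_j) = \mathcal{H}_i \cup \mathcal{H}_j \cup \mathcal{W}(C_i\mid C_j)$ with the three pieces essentially disjoint up to null sets, a short computation gives
$$d(C_1,C_2) = \mu(\mathcal{H}_1 \triangle \mathcal{H}_2) + 2\,\mu\,\mathcal{W}(C_1\mid C_2),$$
where $\triangle$ is symmetric difference of measurable sets. Both terms are pseudometrics in their own right: $\mu(\mathcal{H}_1\triangle\mathcal{H}_2)$ is the $L^1$-distance of indicator functions, and $(C_1,C_2)\mapsto \mu\,\mathcal{W}(C_1\mid C_2)$ satisfies the triangle inequality because $\mathcal{W}(C_1\mid C_3)\subseteq \mathcal{W}(C_1\mid C_2)\cup\mathcal{W}(C_2\mid C_3)$ (a wall separating $C_1$ from $C_3$ must put $C_2$ on one side). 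Hence $d$ is a metric.

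The substantive part is identifying intervals and the unique median. Given $C_1,C_2,C_3\in\mathcal{F}(X)$, the natural candidate median is the convex subspace $M$ whose hyperplane set is the ``majority'' set $\mathcal{H}_M := (\mathcal{H}_1\cap\mathcal{H}_2)\cup(\mathcal{H}_2\cap\mathcal{H}_3)\cup(\mathcal{H}_1\cap\mathcal{H}_3)$ together with an appropriate choice on the wall-separation pieces — concretely, I expect $M$ to be the convex hull of $\{m(x_1,x_2,x_3)\}$ where $x_i$ ranges suitably, or more invariantly the convex hull of the pairwise gate-projections supplied by Lemma~\ref{lem:fgmainlem}. The key technical claim will be a formula for $d$ in terms of hyperplanes that makes $z\in I(C_1,C_2)$ equivalent to a ``betweenness'' condition $\mathcal{H}_1\cap\mathcal{H}_2 \subseteq \mathcal{H}_z \subseteq \mathcal{H}_1\cup\mathcal{H}_2$ plus a compatibility condition on the $\mathcal{W}(\cdot\mid\cdot)$ terms; once that is in place, the median of three such sets in the Boolean-algebra sense is forced and one checks it is realised by a genuine finitely generated convex subspace (using Lemma~\ref{lem:fginter} and Fact~\ref{fact:unionfg} to stay inside $\mathcal{F}(X)$).

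I expect the main obstacle to be precisely the bookkeeping of the $\mathcal{W}(C_i\mid C_j)$ contributions: unlike the discrete warm-up case, where $\mathcal{H}(R)$ is literally a set of hyperplanes and the combinatorics is exact, here one must control the measure-theoretic overlaps between $\mathcal{H}_i$, $\mathcal{H}_j$ and the separating families $\mathcal{W}(C_i\mid C_j)$, and show that the candidate median's hyperplane set has the right measure on each of the atoms of the Boolean algebra generated by $\mathcal{H}_1,\mathcal{H}_2,\mathcal{H}_3$. Establishing that $z\mapsto\mathcal{H}_z$ is injective on $\mathcal{F}(X)$ (so the median, once identified at the level of hyperplane sets, is unique as a subspace) is the cleanest way to get uniqueness, and follows from the gate argument used above for the metric axiom. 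The existence of the median as an element of $\mathcal{F}(X)$ — rather than merely a convex subspace — is where Lemma~\ref{lem:fgmainlem} and Lemma~\ref{lem:fginter} do the real work.
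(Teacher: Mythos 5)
Your triangle-inequality argument has a genuine error. The identity $d(C_1,C_2)=\mu(\mathcal{H}(C_1)\,\triangle\,\mathcal{H}(C_2))+2\,\mu\,\mathcal{W}(C_1\mid C_2)$ is correct, but the second term is \emph{not} a pseudometric on $\mathcal{F}(X)$, and the inclusion you invoke, $\mathcal{W}(C_1\mid C_3)\subseteq\mathcal{W}(C_1\mid C_2)\cup\mathcal{W}(C_2\mid C_3)$, is false: a wall separating $C_1$ from $C_3$ need not leave $C_2$ on one side, since it may cross $C_2$. Take $X=\mathbb{R}$, $C_1=\{0\}$, $C_3=\{1\}$, $C_2=[0,1]$: then $\mu\,\mathcal{W}(C_1\mid C_3)=1$ while $\mu\,\mathcal{W}(C_1\mid C_2)=\mu\,\mathcal{W}(C_2\mid C_3)=0$. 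The triangle inequality for $d$ does hold, but only because a wall crossing $C_2$ is charged once to each of the two symmetric-difference terms, compensating its contribution $2$ to $d(C_1,C_3)$; this cross-term compensation is exactly what the paper's pointwise inequality $\mathds{1}_{\mathcal{H}(C_1\cup C_3)}\leq\mathds{1}_{\mathcal{H}(C_1\cup C_2)}+\mathds{1}_{\mathcal{H}(C_2\cup C_3)}-\mathds{1}_{\mathcal{H}(C_2)}$ (Claim \ref{claim:11triangleinequ}) encodes through a case analysis on the position of a single hyperplane relative to the three subspaces, and some statement of this kind is unavoidable. (Your positive-definiteness argument, by contrast, is essentially the paper's gate argument and is fine.)

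More importantly, the median part --- the actual content of the proposition --- is only a plan, and several of its guiding guesses are wrong, so the gap is not mere bookkeeping. The map $C\mapsto\mathcal{H}(C)$ is far from injective (every singleton has $\mathcal{H}(C)=\emptyset$), so uniqueness of the median cannot be read off hyperplane sets; the proposed betweenness condition $\mathcal{H}(C)\subseteq\mathcal{H}(C_1)\cup\mathcal{H}(C_2)$ is not necessary for $C\in I(C_1,C_2)$ (in $\mathbb{R}$, $[0,1]$ lies between $\{0\}$ and $\{1\}$ although its hyperplanes meet neither); and the convex hull of the pairwise gates of Lemma \ref{lem:fgmainlem} is not the median (for $\{0\},\{1\},\{2\}$ in $\mathbb{R}$ it gives $[0,2]$, whereas the median is $\{1\}$). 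The paper instead characterises $I(C_1,C_2)$ by three geometric conditions (Lemma \ref{lem:intervalFX}): $C$ lies in the convex hull of $C_1\cup C_2$, every hyperplane meeting both $C_1$ and $C_2$ meets $C$, and no hyperplane meeting one of them separates $C$ from the other; the ``only if'' direction needs Lemma \ref{lem:proj1} and Corollary \ref{cor:proj2} to produce a set of hyperplanes of \emph{positive measure} on which the pointwise inequality is strict. It then takes $M$ to be the intersection of the convex hulls of $C_1\cup C_2$, $C_2\cup C_3$ and $C_1\cup C_3$ (finitely generated by Lemma \ref{lem:fginter}, nonempty since it contains every $m(x_1,x_2,x_3)$), shows via the interval criterion that any median must coincide with $M$, and verifies that $M$ satisfies the criterion for each pair. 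None of this is carried out in your proposal, and your own framing (``I expect the main obstacle to be precisely the bookkeeping'') concedes the point: that bookkeeping \emph{is} the proof.
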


\noindent
The first thing to verify is that $d$ defines indeed a distance on $\mathcal{F}(X)$. 

\begin{lemma}\label{lem:dmetric}
$(\mathcal{F}(X),d)$ is a metric space. 
\end{lemma}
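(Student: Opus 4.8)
The plan is to verify the three metric axioms for $d$ separately, with symmetry being immediate from the definition since $\mu(C_1 \cup C_2) = \mu(C_2 \cup C_1)$ and the formula $d(C_1,C_2) = 2\mu(C_1 \cup C_2) - \mu(C_1) - \mu(C_2)$ is manifestly symmetric. The first genuine point is separation, i.e. that $d(C_1,C_2) = 0$ forces $C_1 = C_2$. Writing $d(C_1,C_2) = \mu(\mathcal{H}(C_1 \cup C_2) \setminus \mathcal{H}(C_1)) + \mu(\mathcal{H}(C_1 \cup C_2) \setminus \mathcal{H}(C_2))$ (using that $\mathcal{H}(C_i) \subset \mathcal{H}(C_1 \cup C_2)$), the vanishing of $d$ means, up to measure zero, that every hyperplane meeting $C_1 \cup C_2$ already meets both $C_1$ and $C_2$. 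Combined with Corollary \ref{cor:proj2}: if $C_1 \neq C_2$, say $C_1 \not\subset C_2$, pick $x \in C_1 \setminus C_2$; then $\{x\}$ and $C_2$ are disjoint elements of $\mathcal{F}(X)$ (using Lemma \ref{lem:fginter} and the fact that singletons and $C_2$ are finitely generated), so $\mu\,\mathcal{W}(x \mid C_2) > 0$, and any such separating hyperplane meets $C_1 \subset C_1 \cup C_2$ but not $C_2$, contradicting $d(C_1,C_2)=0$. So separation follows from the positive-measure statement already proved.

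The heart of the matter is the triangle inequality $d(C_1, C_3) \le d(C_1, C_2) + d(C_2, C_3)$ for $C_1, C_2, C_3 \in \mathcal{F}(X)$. The natural approach is a pointwise (hyperplane-by-hyperplane) inequality: I would show that for $\mu$-almost every hyperplane $J$, the indicator-type quantity
$$2 \cdot \mathds{1}[J \in \mathcal{H}(C_1 \cup C_3)] - \mathds{1}[J \in \mathcal{H}(C_1)] - \mathds{1}[J \in \mathcal{H}(C_3)]$$
is bounded above by the sum of the corresponding quantities for the pairs $(C_1,C_2)$ and $(C_2,C_3)$, and then integrate against $\mu$ over the (measurable) set $\mathcal{H}(C_1 \cup C_2 \cup C_3)$, outside of which every term vanishes. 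The combinatorial check has a few cases according to which of the three convex subspaces $J$ meets; the only configurations that could cause trouble are those where $J$ meets $C_1 \cup C_3$, and one verifies case by case that the inequality $2a_{13} - a_1 - a_3 \le (2a_{12} - a_1 - a_2) + (2a_{23} - a_2 - a_3)$ holds for the relevant $\{0,1\}$-values, the key nontrivial input being that if $J$ separates two points of $C_1 \cup C_3$ but meets neither $C_1$ nor $C_3$, then it must separate $C_1$ from $C_3$ and hence (since the three convex sets cannot all lie on the same side without $J$ separating $C_1$ from $C_3$ being compatible) it separates $C_2$ from at least one of $C_1, C_3$ — this is where convexity of halfspaces and Lemma \ref{lem:proj1} or Lemma \ref{lem:fgmainlem} enter.

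Before any of this I should record that $d$ is well-defined and finite: for $C_1, C_2 \in \mathcal{F}(X)$ the convex hull of $C_1 \cup C_2$ is finitely generated (it is the convex hull of the union of the two finite generating sets, and Fact \ref{fact:unionfg} plus \cite[Lemma 4.2]{Coarsemedian} show its median hull is finite), so $\mathcal{H}(C_1 \cup C_2)$ is measurable with finite measure, and likewise $\mathcal{H}(C_1 \cup C_2 \cup C_3)$; thus all the sets appearing above lie in $\mathcal{B}$ and have finite measure, and the pointwise-then-integrate argument is legitimate. The main obstacle I anticipate is not any single step but the bookkeeping in the triangle-inequality case analysis — in particular, making the geometric claim "a hyperplane meeting $C_1 \cup C_3$ but neither $C_1$ nor $C_3$ must separate $C_2$ from one of them" fully rigorous using only gatedness of finitely generated convex subspaces and the separation properties of halfspaces, rather than hand-waving about "adding and removing hyperplanes" as in the warm-up section; everything else (symmetry, finiteness, and deducing separation from Corollary \ref{cor:proj2}) should be routine.
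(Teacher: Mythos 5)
Your proposal is correct and follows essentially the paper's own argument: the triangle inequality via a pointwise indicator inequality (equivalent, after rearranging, to the paper's Claim \ref{claim:11triangleinequ}) integrated against $\mu$ over the finite-measure set $\mathcal{H}(C_1 \cup C_2 \cup C_3)$, and positive-definiteness via a positive-measure family of hyperplanes separating a point $x \in C_1 \setminus C_2$ from $C_2$. The only cosmetic difference is that for separation you invoke Corollary \ref{cor:proj2} applied to $\{x\}$ and $C_2$, whereas the paper projects $x$ onto $C_2$ and uses Lemma \ref{lem:proj1} directly; both rest on the same underlying Lemma \ref{lem:fgmainlem}.
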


\begin{proof}
The map $d$ is clearly symmetric. Now, let $C_1,C_2 \in \mathcal{F}(X)$ be two distinct convex subspaces. Say that there exists some $x \in C_1 \backslash C_2$. Notice that
$$\begin{array}{lcl} d(C_1,C_2) & = & 2 \cdot \mu(C_1 \cup C_2)- \mu(C_1)- \mu(C_2) \\ \\ & = & 2 \cdot ( \mu( \mathcal{H}(C_1) \backslash \mathcal{H}(C_2)) + \mu( \mathcal{H}(C_2) \backslash \mathcal{H}(C_1)) + \mu( \mathcal{H}(C_1) \cap \mathcal{H}(C_2)) \\ & & + \mu \mathcal{W}(C_1 \mid C_2) ) -\mu(C_1)- \mu(C_2) \\ \\ & = & \mu( \mathcal{H}(C_1) \backslash \mathcal{H}(C_2)) + \mu( \mathcal{H}(C_2) \backslash \mathcal{H}(C_1)) + \mu \mathcal{W}(C_1 \mid C_2) \end{array}$$
On the other hand, if $x'$ denotes the projection of $x$ onto $C_2$, then any hyperplane separating $x$ and $x'$ must separate $x$ and $C_2$ according to Lemma \ref{lem:proj1}, so that
$$\mathcal{W}(x \mid x') \subset \left( \mathcal{H}(C_1) \backslash \mathcal{H}(C_2) \right) \cup \mathcal{W}(C_1 \mid C_2).$$
Therefore, we deduce that
$$d(C_1,C_2) \geq \mu \mathcal{W}(x \mid x') = d_X(x,x')$$
which is positive because $x$ does not belong to $C_2$. Thus, we have proved that $d$ is positive-definite.

\medskip \noindent
Next, we want to prove the triangle inequality. So let $C_1,C_2,C_3 \in \mathcal{F}(X)$ be three convex subspaces. First of all, notice that

\begin{claim}\label{claim:11triangleinequ}
The following inequality holds:
$$\mathds{1}_{\mathcal{H}(C_1 \cup C_3)} \leq \mathds{1}_{\mathcal{H}(C_1 \cup C_2)} + \mathds{1}_{\mathcal{H}(C_2 \cup C_3)} - \mathds{1}_{\mathcal{H}(C_2)}$$
\end{claim}

\noindent
Indeed, for every hyperplane $J$ of $X$, if we denote respectively by $L$ and $R$ the left-hand-side and the right-hand-side of the previous inequality, then
\begin{itemize}
	\item if $J$ intersects either both $C_1$ and $C_2$, or both $C_2$ and $C_3$, then $L(J)=1=R(J)$; 
	\item if $J$ intersects either $C_1$ but not $C_2$, or $C_3$ but not $C_2$, then $L(J)=1$ and $R(J) \geq 1$;
	\item if $J$ intersects $C_2$ but not $C_1$ nor $C_3$, then $L(J) \leq 1$ and $R(J)=1$;
	\item if $J$ delimits a halfspace containing $C_1,C_2,C_3$, then $L(J)=0=R(J)$;
	\item if $J$ separates $C_2$ and $C_1 \cup C_3$, then $L(J)=0$ and $R(J)=2$;
	\item if $J$ separates either $C_1$ and $C_2 \cup C_3$, or $C_3$ and $C_1 \cup C_2$, then $L(J)=1=R(J)$.
\end{itemize}
This proves our claim. By integrating this inequality, we deduce that
$$\mu(C_1 \cup C_3) \leq \mu(C_1 \cup C_2)+ \mu(C_2 \cup C_3)- \mu (C_2).$$
As a consequence,
$$\begin{array}{lcl} d(C_1,C_2)+d(C_2,C_3) & = & 2 \left( \mu(C_1 \cup C_2)+ \mu(C_2 \cup C_3)- \mu (C_2) \right) - \mu(C_1)- \mu(C_3) \\ \\ & \geq & \mu(C_1 \cup C_3)- \mu(C_1)- \mu(C_3)= d(C_1,C_3) \end{array}$$
which proves the triangle inequality. 
\end{proof}

\noindent
The next step towards the proof of Proposition \ref{prop:Fmedian} is to understand the intervals in our metric space.

\begin{lemma}\label{lem:intervalFX}
Let $X$ be a median space and $C,C_1,C_2 \in \mathcal{F}(X)$ three convex subspaces. The point $C$ belongs to the interval between $C_1$ and $C_2$ in $\mathcal{F}(X)$ if and only if the following three conditions are satisfied:
\begin{itemize}
	\item[(i)] $C$ is included into the convex hull of $C_1 \cup C_2$;
	\item[(ii)] any hyperplane intersecting both $C_1$ and $C_2$ must intersect $C$;
	\item[(iii)] no hyperplane intersecting $C_1$ separates $C$ and $C_2$, and similarly no hyperplane intersecting $C_2$ separates $C$ and $C_1$.
\end{itemize}
\end{lemma}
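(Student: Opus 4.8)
The plan is to translate the condition $C \in I(C_1, C_2)$, i.e. $d(C_1,C) + d(C,C_2) = d(C_1,C_2)$, into a statement about characteristic functions of the hyperplane sets $\mathcal{H}(\cdot)$, and then extract the three conditions (i)--(iii) by testing hyperplane by hyperplane. First I would write out, using the definition of $d$,
$$d(C_1,C)+d(C,C_2)-d(C_1,C_2) = 2\big(\mu(C_1\cup C) + \mu(C\cup C_2) - \mu(C_1\cup C_2) - \mu(C)\big),$$
so that $C \in I(C_1,C_2)$ is equivalent to the vanishing of the integral of
$$\Phi := \mathds{1}_{\mathcal{H}(C_1\cup C)} + \mathds{1}_{\mathcal{H}(C\cup C_2)} - \mathds{1}_{\mathcal{H}(C_1\cup C_2)} - \mathds{1}_{\mathcal{H}(C)}.$$
The key point is that $\Phi \geq 0$ pointwise: this is exactly Claim \ref{claim:11triangleinequ} applied to the triple $(C_1, C, C_2)$ (with $C$ playing the role of the ``middle'' subspace), so the displayed difference is always $\geq 0$, and $C\in I(C_1,C_2)$ holds precisely when $\Phi(J)=0$ for $\mu$-almost every hyperplane $J$.

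Next I would carry out a case analysis on a hyperplane $J$, refining the one in the proof of Claim \ref{claim:11triangleinequ}, to determine exactly which hyperplanes make $\Phi(J)>0$. Going through the cases: $\Phi(J)>0$ (in fact $\Phi(J)\geq 1$) exactly when (a) $J$ separates $C$ from $C_1\cup C_2$ — this is the ``bad'' case where $L=0$ but both of the first two terms vanish only if... — more precisely when $J$ intersects neither $C_1$ nor $C_2$ but separates $C$ from both; (b) $J$ intersects both $C_1$ and $C_2$ but not $C$; or (c) $J$ intersects $C_1$ (resp. $C_2$) and separates $C$ from $C_2$ (resp. from $C_1$). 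Ruling out case (a) for almost every $J$, combined with the fact that a point of $C$ outside the convex hull of $C_1\cup C_2$ would be separated from $C_1\cup C_2$ by a positive-measure set of hyperplanes (Corollary \ref{cor:proj2}, since both $C$ and the convex hull of $C_1\cup C_2$ lie in $\mathcal{F}(X)$ by Fact \ref{fact:unionfg} and Lemma \ref{lem:fginter}), gives condition (i); ruling out case (b) gives condition (ii); and ruling out case (c) gives condition (iii). For the converse, assuming (i)--(iii) one checks directly that each of the three enumerated cases is vacuous, hence $\Phi$ vanishes identically and $C\in I(C_1,C_2)$.

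The main obstacle I anticipate is the bookkeeping in the case analysis — in particular, being careful that ``$J$ does not intersect $C$'' together with ``$C$ is connected/convex'' forces $J$ to separate $C$ from something, and correctly matching each residual bad case to the right one of (i), (ii), (iii). The subtle point is passing from ``$\Phi$ vanishes $\mu$-almost everywhere'' to the honest set-theoretic statements in (i)--(iii): for (ii) and (iii) one must argue that the relevant families of hyperplanes, if nonempty, have positive measure, which is where Corollary \ref{cor:proj2} (applied to suitable finitely generated convex subspaces, using that intersections and convex hulls of unions stay in $\mathcal{F}(X)$) does the work; and for (i) one similarly uses that a point of $C$ not in the convex hull of $C_1 \cup C_2$ is cut off by a positive-measure hyperplane family. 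Once these positivity remarks are in place, the equivalence follows by assembling the cases.
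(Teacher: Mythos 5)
Your overall route is the same as the paper's: rewrite $d(C_1,C)+d(C,C_2)-d(C_1,C_2)$ as twice the integral of $\Phi = \mathds{1}_{\mathcal{H}(C_1\cup C)}+\mathds{1}_{\mathcal{H}(C\cup C_2)}-\mathds{1}_{\mathcal{H}(C_1\cup C_2)}-\mathds{1}_{\mathcal{H}(C)}$, observe $\Phi\geq 0$ pointwise by Claim \ref{claim:11triangleinequ}, and decide hyperplane by hyperplane where $\Phi>0$. However, your list of hyperplanes with $\Phi(J)>0$ is incomplete: besides (a) $J$ separating $C$ from $C_1\cup C_2$, (b) $J$ crossing $C_1$ and $C_2$ but not $C$, and (c) $J$ crossing $C_1$ (resp.\ $C_2$) and separating $C$ from $C_2$ (resp.\ $C_1$), there is also the case where $J$ crosses $C$ while $C_1\cup C_2$ lies entirely in one halfspace of $J$ (then $\Phi(J)=1$). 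This case is likewise excluded by condition (i), so the statement survives, but your converse step (``each of the three enumerated cases is vacuous, hence $\Phi$ vanishes identically'') currently rests on an incomplete enumeration and needs this repair.

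More substantially, the key step of the ``only if'' direction for (ii) and (iii) --- passing from one bad hyperplane to a set of positive measure of hyperplanes with $\Phi>0$ --- is left as ``Corollary \ref{cor:proj2} applied to suitable finitely generated convex subspaces,'' and the closure properties you invoke do not directly produce those subspaces: the natural candidate $C_1\cap D$, where $D$ is a halfspace witnessing the failure, is an intersection with a halfspace, and halfspaces need not be finitely generated, so Lemma \ref{lem:fginter} does not apply. The paper circumvents this by intersecting the finite generating sets $F_1,F_2$ of $C_1,C_2$ with $D$ and $D^c$: it takes $A$ the convex hull of $(F_1\cap D)\cup(F_2\cap D)$ and $B$ the convex hull of $(F_1\cap D^c)\cup(F_2\cap D^c)\cup C$, which are disjoint members of $\mathcal{F}(X)$, so that $\mu\,\mathcal{W}(A\mid B)>0$ by Corollary \ref{cor:proj2}, and checks that every $J\in\mathcal{W}(A\mid B)$ satisfies $\Phi(J)\geq 1$. (A variant would be to pick single points of $C_1$, resp.\ $C_2$, on each side of the bad hyperplane and take convex hulls of these points, adjoining $C$ on its side.) Until such a construction is specified and verified, this step is a genuine gap. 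Your treatment of condition (i), via the projection of a point of $C$ onto the convex hull of $C_1\cup C_2$ together with Lemma \ref{lem:proj1} and the positivity of the separating measure, is correct and matches the paper.
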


\begin{proof}
Because
$$d(C_1,C)+d(C,C_2) = 2 \cdot (\mu(C_1 \cup C) + \mu(C \cup C_2) -\mu(C)) - \mu(C_1)- \mu(C_2)$$
and
$$d(C_1,C_2)= \mu(C_1 \cup C_2)- \mu(C_1)- \mu(C_2),$$
it follows that $C$ belongs to $I(C_1,C_2)$ if and only if the equality
\begin{equation}\label{eq:mu}
\mu(C_1 \cup C) + \mu(C \cup C_2) -\mu(C)= \mu(C_1 \cup C_2) 
\end{equation}
holds. Suppose that the three conditions of our statement hold. We want to prove that
\begin{equation}\label{eq:11}
\mathds{1}_{\mathcal{H}(C_1 \cup C_2)} = \mathds{1}_{\mathcal{H}(C_1 \cup C)} + \mathds{1}_{\mathcal{H}(C_2 \cup C)} - \mathds{1}_{\mathcal{H}(C)}
\end{equation}
so that the previous equality will follow by integration. For every hyperplane $J$ of $X$, if we denote respectively by $L$ and $R$ the left-hand-side and the right-hand-side of our equality above, then
\begin{itemize}
	\item if $J$ intersects either both $C_1$ and $C$, or both $C_2$ and $C$, then $L(J)=1=R(J)$; 
	\item if $J$ intersects $C_1$ but not $C$, then $J$ cannot intersect $C_2$ by condition $(ii)$ and it cannot separate $C_2$ and $C$ by condition $(iii)$, hence $L(J)=1 =R(J)$; if $J$ intersects $C_2$ but not $C$, the situation is symmetric;
	\item if $J$ intersects $C$ but not $C_1$ nor $C_2$, then $J$ must separate $C_1$ and $C_2$ by condition $(i)$, so that $L(J) = 1 = R(J)$;
	\item if $J$ delimits a halfspace containing $C_1,C_2,C$, then $L(J)=0=R(J)$;
	\item $J$ cannot separate $C$ from $C_1 \cup C_2$ by condition $(i)$;
	\item if $J$ separates either $C_1$ and $C \cup C_2$, or $C_2$ and $C_1 \cup C$, then $L(J)=1=R(J)$.
\end{itemize}
Thus, we have proved that, if $C$ satisfies the conditions $(i)$, $(ii)$ and $(iii)$, then it belongs to $I(C_1,C_2)$.

\medskip \noindent
Conversely, if we denote respectively by $L$ and $R$ the left-hand-side and the right-hand-side of the equality \ref{eq:11}, we claim that, if $C$ does not satisfy one of the conditions $(i)$, $(ii)$ or $(iii)$, then the inequality $L < R$ holds on a set of positive measure. Because we already know from Claim \ref{claim:11triangleinequ} that the inequality $L \leq R$ holds everywhere, it follows by integrating this inequality that the equality \ref{eq:mu} cannot hold, so that $C$ cannot belong to the interval $I(C_1,C_2)$. 
\begin{itemize}
	\item If $C$ does not satisfy the condition $(i)$, there exists a point $x \in C$ which does not belong to the convex hull of $C_1 \cup C_2$. Let $x'$ denote the projection of $x$ onto this convex hull. According to Lemma \ref{lem:proj1}, any hyperplane separating $x$ from $x'$ must separate $x$ from the convex hull of $C_1 \cup C_2$, so that $L(J)=0 < 1 \leq R(J)$ for every $J \in \mathcal{W}(x \mid x')$. On the other hand, $\mu \mathcal{W}( x \mid x' ) = d(x,x')$ is positive.
	\item If $C$ does not satisfy either the condition $(ii)$ or the condition $(iii)$, there exists a halfspace $D$ intersecting both $C_1$ and $C_2$ but which is disjoint from $C$. Let $F_1,F_2 \subset X$ be two finite subsets such that $C_1$ and $C_2$ are the convex hulls of $F_1$ and $F_2$ respectively. Denote by $A$ the convex hull of $(F_1 \cap D) \cup (F_2 \cap D)$, and by $B$ the convex hull of $(F_1 \cap D^c) \cup (F_2 \cap D^c) \cup C$. Notice that $A$ and $B$ are non empty two finitely generated convex subspaces separated by the hyperplane $\{D,D^c\}$. Moreover, $L(J) \leq 1 < 2= R(J)$ for every $J \in \mathcal{W}(A \mid B)$. On the other hand, because $A$ and $B$ are disjoint, we deduce from Corollary \ref{cor:proj2} that $\mathcal{W}(A \mid B)$ has positive measure. 
\end{itemize}
This concludes the proof of our lemma.
\end{proof}

\begin{proof}[Proof of Proposition \ref{prop:Fmedian}.]
Let $C_1,C_2,C_3 \in \mathcal{F}(X)$ be three convex subspaces. Let $M$ denote the intersection of the convex hulls of $C_1 \cup C_2$, $C_2 \cup C_3$ and $C_1 \cup C_3$. Notice that $M$ is finitely generated according to Lemma \ref{lem:fginter}, and is non empty because $m(x_1,x_2,x_3) \in M$ for every $x_1 \in C_1$, $x_2 \in C_2$ and $x_3 \in C_3$. According to Lemma \ref{lem:intervalFX},
$$I(C_1,C_2) \cap I(C_2,C_3) \cap I(C_1,C_3) \subset \{ C \in \mathcal{F}(X) \mid C \subset M \}.$$
Let $C \in \mathcal{F}(X)$ be a convex subspace satisfying $C \subsetneq M$. Fix a point $x \in M \backslash C$, let $x'$ denote its projection onto $C$ and let $J$ be a hyperplane separating $x$ and $x'$. Notice that, according to Lemma \ref{lem:proj1}, $J$ separates $x$ and $x'$. Moreover, two subcomplexes among $C_1,C_2,C_3$ cannot be both included into some halfspace $D$ delimited by $J$ since otherwise the convex hull of the union of these two subcomplexes, and a fortiori $M$, would be included into $D$, which is impossible because $J$ separates two points of $M$, namely $x$ and $x'$. Therefore, $J$ intersects at least one subcomplex among $C_1,C_2,C_3$, say $C_1$, and either separates $C_2$ and $C_3$ or intersects at least one of $C_2$ and $C_3$. In the former case, if $C$ belongs to the same halfspace delimited by $J$ as $C_2$, say, then we deduce from Lemma \ref{lem:intervalFX} that $C$ does not belong to $I(C_1,C_3)$; in the latter case, if $J$ intersects both $C_1$ and $C_2$, say, then we also deduce from Lemma \ref{lem:intervalFX} that $C$ does not belong to $I(C_1,C_2)$.

\medskip \noindent
Thus, we have proved that $M$ is the only candidate for a median point of $C_1,C_2,C_3$. We claim that $M$ is such a median point. 

\medskip \noindent
Let $J$ be a hyperplane intersecting both $C_1$ and $C_2$. So there exist points $x_1,y_1 \in C_2$ and $x_2,y_2 \in C_2$ such that $J$ separates $x_1$ and $y_1$, and $x_2$ and $y_2$; say that $x_1$ and $x_2$ belong to the same halfspace delimited by $J$. Fix an arbitrary point $z \in C_3$. Since halfspaces are convex, it follows that $m(x_1,x_2,z)$ belongs to the halfspace delimited by $J$ containing $x_1$ and $x_2$, and that $m(y_1,y_2,z)$ belongs to the halfspace delimited by $J$ containing $y_1$ and $y_2$, so $J$ separates the two points $m(x_1,x_2,z)$ and $m(y_1,y_2,z)$ of $M$. A fortiori, $J$ intersects $M$. Now, suppose by contradiction that there exists a hyperplane $J$ intersecting $C_1$ which separates $M$ and $C_2$. As a consequence of our previous observation, $J$ cannot intersect $C_3$. Moreover, $C_3$ cannot be included into the halfspace delimited by $J$ which contains $C_2$, because otherwise the convex hull of $C_2 \cup C_3$ and $M$ would be separated by $J$, which impossible by the definition of $M$. Therefore, $J$ separates $C_2$ and $C_3$. Fix two arbitrary points $x_2 \in C_2$ and $x_3 \in C_3$, and fix a point $x_1 \in C_1$ which belongs to the same halfspace delimited by $J$ as $x_2$. Since halfspaces are convex, it follows that the point $m(x_1,x_2,x_3)$ of $M$ belongs to the same halfspace delimited by $J$ as $C_2$, which contradicts the assumption that $J$ separates $C_2$ and $C$. Therefore, no hyperplane intersecting $C_1$ separates $C$ and $C_2$; and similarly, no hyperplane intersecting $C_2$ separates $C$ and $C_3$. 

\medskip \noindent
Thanks to Lemma \ref{lem:intervalFX}, we conclude that $M$ belongs to the interval $I(C_1,C_2)$. By symmetry, we deduce that $M$ also belongs to the intervals $I(C_1,C_3)$ and $I(C_2,C_3)$, so that $M \in I(C_1,C_2) \cap I(C_2,C_3) \cap I(C_1,C_3)$, ie., $M$ is a median point of $C_1,C_2,C_3$. 
\end{proof}

\subsection{The space of wreaths}\label{section:spacewreath}

\noindent
We are now ready to define diadem products of median spaces and to study their geometry.

\begin{definition}
Let $X,Y$ be two median spaces and $1 \in X$ a basepoint. The \emph{diadem product} $(X,1)\circledast Y$ is the set of \emph{wreaths} $(C,\varphi)$, where $Y \in \mathcal{F}(Y)$ and where $\varphi : Y \to X$ satisfies $\varphi(y)=1$ for all but finitely many $y \in Y$ (written $\varphi \in X^{(Y)}$ in the sequel), endowed with the metric $\delta$ defined as
$$((C_1,\varphi_1),(C_2,\varphi_2)) \mapsto 2 \cdot \mu(C_1 \cup C_2 \cup \varphi_1 \Delta \varphi_2)- \mu(C_1)- \mu(C_2)+ \sum\limits_{y \in Y} d(\varphi_1(y), \varphi_2(y)).$$
\end{definition}

\noindent
The fact that $\delta$ is indeed a metric will be justified later; see Corollary \ref{cor:Isadist}. The main result of this section is the following:

\begin{thm}\label{thm:Wdeltamedian}
A diadem product of two median spaces is a median space.
\end{thm}

\noindent
From now on, we fix two median spaces $X,Y$ and a basepoint $1 \in X$, and for short we denote by $\mathfrak{W}$ the diadem product $(X,1) \circledast Y$. Before proving the theorem, we need to introduce some preliminary material. 

\begin{definition}
A \emph{leaf} of $\mathfrak{W}$ is a subspace $\mathfrak{W}(\varphi):= \{ (C, \varphi) \mid C \in \mathcal{F}(Y) \},$ the map $\varphi \in X^{(Y)}$ being fixed.
\end{definition}

\noindent
Clearly, the map $C \mapsto (C,\varphi)$ defines an isometry $\mathcal{F}(Y) \to \mathfrak{W}(\varphi)$, so that we already understand the geometry of the leaves of $\mathfrak{W}$ thanks to the previous section. Fixing a leaf $\mathfrak{W}(\varphi)$, we define a \emph{projection} 
$$p_{\varphi} : \left\{ \begin{array}{ccc} \mathfrak{W} & \to & \mathfrak{W}(\varphi) \\ \\ (C,\psi) & \mapsto & \left( \overline{C \cup \psi \Delta \varphi}, \varphi \right) \end{array} \right. ,$$
where $\overline{\cdot}$ denotes the convex hull. As a consequence of our first preliminary lemma below, this map is a ``true'' projection, in the sense that $p_{\varphi}(x)$ is the unique point of the leaf $\mathfrak{W}(\varphi)$ minimising the distance to a given point $x$. 

\begin{lemma}\label{lem:pvarphi}
For every $\varphi \in X^{(Y)}$, every $x \in \mathfrak{W}$ and every $y \in \mathfrak{W}(\varphi)$, the following equality holds
$$\delta(x,y)= \delta(x, p_{\varphi}(x))+ \delta(p_{\varphi}(x),y).$$
\end{lemma}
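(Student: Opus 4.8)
The plan is to prove the identity by expanding the three values of $\delta$ straight from the definition and collapsing each ``$\mu$ of a union'' term to the measure of the corresponding convex hull.

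Set $x=(C,\psi)$ and $y=(E,\varphi)$, and put $D:=\overline{C\cup\psi\Delta\varphi}$, so that $p_{\varphi}(x)=(D,\varphi)$. First I would record the finiteness facts that make everything below meaningful. Since $\psi$ and $\varphi$ both lie in $X^{(Y)}$, the set $\psi\Delta\varphi\subset Y$ is finite; as $C$ and $E$ are convex hulls of finite subsets of $Y$, each of the sets $C\cup\psi\Delta\varphi$, $C\cup D\cup\psi\Delta\varphi$, $D\cup E$ and $C\cup E\cup\psi\Delta\varphi$ has a finitely generated convex hull. In particular $D\in\mathcal{F}(Y)$, every occurrence of $\mu$ below is defined, and $\sum_{y\in Y}d(\psi(y),\varphi(y))$ has only finitely many nonzero terms.

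The computation rests on two elementary observations, both exploiting that $\mathcal{H}(F)$ --- the set of hyperplanes separating two points of $F$ --- is also the set of hyperplanes meeting the convex hull of $F$, so that $\mu$ of a union of subsets of $Y$ depends only on the convex hull of that union. First, $D$ is convex and contains $C\cup\psi\Delta\varphi$, hence $\overline{C\cup D\cup\psi\Delta\varphi}=D$ and therefore $\mu(C\cup D\cup\psi\Delta\varphi)=\mu(D)$. Second, $D=\overline{C\cup\psi\Delta\varphi}$, hence $\overline{D\cup E}=\overline{C\cup\psi\Delta\varphi\cup E}$ and therefore $\mu(D\cup E)=\mu(C\cup E\cup\psi\Delta\varphi)$. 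Also $\varphi\Delta\varphi=\emptyset$ and $d(\varphi(y),\varphi(y))=0$ for all $y$.

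Then I would assemble the pieces. Using the first observation,
$$\delta(x,p_{\varphi}(x))=2\mu(C\cup D\cup\psi\Delta\varphi)-\mu(C)-\mu(D)+\sum_{y\in Y}d(\psi(y),\varphi(y))=\mu(D)-\mu(C)+\sum_{y\in Y}d(\psi(y),\varphi(y)),$$
and, the $\varphi\Delta\varphi$-term and the sum both vanishing,
$$\delta(p_{\varphi}(x),y)=2\mu(D\cup E)-\mu(D)-\mu(E),$$
which is precisely the distance between $D$ and $E$ in $\mathcal{F}(Y)$. Adding the two expressions, the two copies of $\mu(D)$ cancel, and the second observation yields
$$\delta(x,p_{\varphi}(x))+\delta(p_{\varphi}(x),y)=2\mu(C\cup E\cup\psi\Delta\varphi)-\mu(C)-\mu(E)+\sum_{y\in Y}d(\psi(y),\varphi(y))=\delta(x,y),$$
as claimed.

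I do not expect a genuine obstacle: the statement is an exact cancellation rather than an inequality, and in particular it does not use that $\delta$ is a distance (that is deduced afterwards, partly from this lemma). The only points needing care are the finite-generation bookkeeping, so that each $\mu$ is defined and the defining sum is finite, and the consistent use of the identification of $\mathcal{H}(F)$ with the hyperplanes meeting $\overline{F}$ in order to replace each union of subsets of $Y$ by a convex subspace.
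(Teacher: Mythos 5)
Your proof is correct and follows the same route as the paper: the paper's own proof simply states that $\delta(x,p_{\varphi}(x))+\delta(p_{\varphi}(x),y)$ simplifies to $2\mu(C\cup E\cup\psi\Delta\varphi)-\mu(C)-\mu(E)+\sum_{y\in Y}d(\psi(y),\varphi(y))=\delta(x,y)$, and your two observations (that $\mu(C\cup D\cup\psi\Delta\varphi)=\mu(D)$ and $\mu(D\cup E)=\mu(C\cup E\cup\psi\Delta\varphi)$ because $\mu$ only depends on convex hulls) are exactly the cancellations implicit in that simplification. The added finiteness bookkeeping is harmless and correct.
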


\begin{proof}
If $x=(C,\psi)$ and $y= (Q, \varphi)$, then the sum $ \delta(x,p_{\varphi}(x))+ \delta(p_{\varphi}(x),y)$ simplifies as
$$2 \cdot \mu (C \cup Q \cup \varphi \Delta \psi) - \mu(C)- \mu(Q) + \sum\limits_{y \in Y} d(\varphi(y),\psi(y)),$$
which is precisely $\delta(x,y)$. 
\end{proof}

\noindent
Although this lemma is completely elementary, it has important consequences, and it will turn out to be fundamental in the proof of Theorem \ref{thm:Wdeltamedian}. For instance, we are able to show that $\delta$ defines a distance on $\mathfrak{W}$. 

\begin{cor}\label{cor:Isadist}
$(\mathfrak{W}, \delta)$ is a metric space. 
\end{cor}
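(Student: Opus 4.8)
The plan is to deduce everything from Lemma~\ref{lem:pvarphi} together with the fact, already established in Section~\ref{section:fgconvex} and Proposition~\ref{prop:Fmedian}, that each leaf $\mathfrak{W}(\varphi)$ is isometric to the metric space $(\mathcal{F}(Y),d)$. Symmetry of $\delta$ is immediate from the symmetry of the formula (note $\varphi_1\Delta\varphi_2=\varphi_2\Delta\varphi_1$ and $d(\varphi_1(y),\varphi_2(y))=d(\varphi_2(y),\varphi_1(y))$). For positive-definiteness, suppose $\delta((C_1,\varphi_1),(C_2,\varphi_2))=0$. The sum $\sum_{y\in Y}d(\varphi_1(y),\varphi_2(y))$ is nonnegative, and the remaining term $2\mu(C_1\cup C_2\cup\varphi_1\Delta\varphi_2)-\mu(C_1)-\mu(C_2)$ is nonnegative as well, being bounded below by $2\mu(C_1\cup C_2)-\mu(C_1)-\mu(C_2)=d(C_1,C_2)\geq 0$ (using monotonicity of $\mu$ under inclusion of hyperplane sets and Lemma~\ref{lem:dmetric}). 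Hence both terms vanish: $\varphi_1=\varphi_2$ (so $\varphi_1\Delta\varphi_2=\emptyset$) and $d(C_1,C_2)=0$, whence $C_1=C_2$ by Lemma~\ref{lem:dmetric}. Thus $(C_1,\varphi_1)=(C_2,\varphi_2)$.

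The triangle inequality is where Lemma~\ref{lem:pvarphi} does the real work. Take three wreaths $x=(C_1,\varphi_1)$, $y=(C_2,\varphi_2)$, $z=(C_3,\varphi_3)$. Applying Lemma~\ref{lem:pvarphi} with the leaf $\mathfrak{W}(\varphi_2)$ and the points $x$ and $y\in\mathfrak{W}(\varphi_2)$ gives $\delta(x,y)=\delta(x,p_{\varphi_2}(x))+\delta(p_{\varphi_2}(x),y)$; doing the same for $z$ gives $\delta(z,y)=\delta(z,p_{\varphi_2}(z))+\delta(p_{\varphi_2}(z),y)$. Since $p_{\varphi_2}(x)$, $p_{\varphi_2}(z)$ and $y$ all lie in the leaf $\mathfrak{W}(\varphi_2)$, which is isometric to $(\mathcal{F}(Y),d)$, the triangle inequality inside that leaf yields $\delta(p_{\varphi_2}(x),y)\leq \delta(p_{\varphi_2}(x),p_{\varphi_2}(z))+\delta(p_{\varphi_2}(z),y)$. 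Similarly, by the triangle inequality in the leaf, $\delta(p_{\varphi_2}(x),p_{\varphi_2}(z))\leq \delta(p_{\varphi_2}(x),y)+\delta(y,p_{\varphi_2}(z))$ — but what we actually want is to bound $\delta(x,z)$. For that, I would instead note that $\delta(x,z)\leq \delta(x,p_{\varphi_2}(x))+\delta(p_{\varphi_2}(x),p_{\varphi_2}(z))+\delta(p_{\varphi_2}(z),z)$: the two outer terms are the ``non-leaf'' parts pinned down exactly by Lemma~\ref{lem:pvarphi}, and the middle term is a distance within a single leaf.

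So the crux is the sub-inequality $\delta(x,z)\leq \delta(x,p_{\varphi_2}(x))+\delta(p_{\varphi_2}(x),p_{\varphi_2}(z))+\delta(p_{\varphi_2}(z),z)$. Writing everything out via the defining formula, the $X$-valued parts telescope correctly because $\sum_y d(\varphi_1(y),\varphi_3(y))\leq \sum_y d(\varphi_1(y),\varphi_2(y))+\sum_y d(\varphi_2(y),\varphi_3(y))$ pointwise; and the hyperplane-measure parts reduce to an indicator inequality of the same flavour as Claim~\ref{claim:11triangleinequ}, now involving $\mathcal{H}$ of the various unions $C_i\cup C_j\cup\varphi_i\Delta\varphi_j$ and the convex hulls $\overline{C_i\cup\varphi_i\Delta\varphi_2}$. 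I expect this indicator bookkeeping — checking case by case which of $C_1,C_2,C_3$ and which of the supports a given hyperplane meets or separates — to be the main obstacle, essentially a more elaborate version of the case analysis already carried out in Claim~\ref{claim:11triangleinequ} and Lemma~\ref{lem:intervalFX}. Once that inequality is in hand, combining it with the two exact splittings from Lemma~\ref{lem:pvarphi} and the triangle inequality inside the leaf $\mathfrak{W}(\varphi_2)$ gives $\delta(x,z)\leq\delta(x,y)+\delta(y,z)$, completing the proof.
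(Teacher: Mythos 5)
Your symmetry and positive-definiteness arguments are fine and are essentially the paper's (you are even a bit more careful, checking that the $\mu$-term is nonnegative before concluding $\varphi_1=\varphi_2$ and then $C_1=C_2$ via Lemma \ref{lem:dmetric}). Your plan for the triangle inequality is also exactly the paper's: split $\delta(x,y)$ and $\delta(y,z)$ with Lemma \ref{lem:pvarphi} applied to the leaf $\mathfrak{W}(\varphi_2)$ of the middle point, use the triangle inequality inside that leaf (Lemma \ref{lem:dmetric}, via the isometry of the leaf with $\mathcal{F}(Y)$), and reduce everything to the estimate
\[
\delta(x,z)\ \leq\ \delta(x,p_{\varphi_2}(x))+\delta(p_{\varphi_2}(x),p_{\varphi_2}(z))+\delta(p_{\varphi_2}(z),z).
\]
(Incidentally, the first leaf inequality you write, $\delta(p_{\varphi_2}(x),y)\leq\delta(p_{\varphi_2}(x),p_{\varphi_2}(z))+\delta(p_{\varphi_2}(z),y)$, is not the relevant one; the one your argument actually needs, and which you do use afterwards, is $\delta(p_{\varphi_2}(x),p_{\varphi_2}(z))\leq\delta(p_{\varphi_2}(x),y)+\delta(y,p_{\varphi_2}(z))$.) But the displayed estimate is precisely where your proof stops: it carries the entire content of the triangle inequality, and you only assert it, deferring to an ``indicator bookkeeping'' that you expect to be the main obstacle and never carry out. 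As written, this is a genuine gap.

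The gap is fillable, and in fact by a short computation rather than by a case analysis in the style of Claim \ref{claim:11triangleinequ}. Since $p_{\varphi_2}(x)=\bigl(\overline{C_1\cup\varphi_1\Delta\varphi_2},\varphi_2\bigr)$ and the hyperplanes meeting a convex hull are exactly the hyperplanes separating two points of a generating set, the measure of the hull cancels and one gets $\delta(x,p_{\varphi_2}(x))=\mu(C_1\cup\varphi_1\Delta\varphi_2)-\mu(C_1)+\sum_{y\in Y}d(\varphi_1(y),\varphi_2(y))$, and similarly for $z$. Adding the middle term $2\mu\bigl(C_1\cup\varphi_1\Delta\varphi_2\cup C_3\cup\varphi_3\Delta\varphi_2\bigr)-\mu(C_1\cup\varphi_1\Delta\varphi_2)-\mu(C_3\cup\varphi_3\Delta\varphi_2)$, the hull measures cancel and the right-hand side of your crux inequality equals
\[
2\mu(C_1\cup C_3\cup\varphi_1\Delta\varphi_2\cup\varphi_2\Delta\varphi_3)-\mu(C_1)-\mu(C_3)+\sum_{y\in Y}\bigl(d(\varphi_1(y),\varphi_2(y))+d(\varphi_2(y),\varphi_3(y))\bigr).
\]
This dominates $\delta(x,z)$ for two elementary reasons: $\varphi_1\Delta\varphi_3\subset\varphi_1\Delta\varphi_2\cup\varphi_2\Delta\varphi_3$, so $\mathcal{H}(C_1\cup C_3\cup\varphi_1\Delta\varphi_3)\subset\mathcal{H}(C_1\cup C_3\cup\varphi_1\Delta\varphi_2\cup\varphi_2\Delta\varphi_3)$ and $\mu$ is monotone; and the sums compare pointwise by the triangle inequality in $X$. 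So no hyperplane-by-hyperplane analysis is needed here: that combinatorics is already packaged in Lemma \ref{lem:dmetric} (used inside the leaf) and in the cancellation built into Lemma \ref{lem:pvarphi}. With this computation inserted, your argument coincides with the paper's proof.
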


\begin{proof}
First of all, notice that the map $\delta$ is clearly symmetric.

\medskip \noindent
Next, if two wreaths $(C_1, \varphi_1),(C_2, \varphi_2) \in \mathfrak{W}$ satisfy $\delta((C_1,\varphi_1),(C_2, \varphi_2))=0$, then necessarily $\sum\limits_{y \in Y} d(\varphi_1(y), \varphi_2(y))=0$ for every $y \in Y$. This implies that $\varphi_1=\varphi_2$, ie., our two wreaths belong to a common leaf $\mathfrak{W}(\varphi)$. On the other hand, the restriction of $\delta$ to this leaf, namely $((Q_1,\varphi),(Q_2, \varphi)) \mapsto d(Q_1,Q_2)$, is a distance according to Lemma \ref{lem:dmetric}. Consequently, $C_1$ must be equal to $C_2$, so that $(C_1, \varphi_1)= (C_2, \varphi_2)$. We have proved that $\delta$ is positive-definite.

\medskip \noindent
Finally, for any three wreaths $x=(C_1, \varphi_1)$, $y=(C_2, \varphi_2)$ and $z=(C, \varphi)$, we deduce from Lemma \ref{lem:pvarphi} that
$$\delta(x,z)+\delta(z,y) = \delta(x,p_{\varphi}(x))+ \delta(p_{\varphi}(x),z)+ \delta(z, p_{\varphi}(y)) + \delta(p_{\varphi}(y),y).$$
On the other hand, since we know from Lemma \ref{lem:dmetric} that the restriction of $\delta$ to the leaf $\mathfrak{W}(\varphi)$, is a distance, it follows that $\delta(p_{\varphi}(x),z)+ \delta(z, p_{\varphi}(y)) \geq \delta(p_{\varphi}(x),p_{\varphi}(y))$, hence
$$\delta(x,z)+\delta(z,y) \geq \delta(x,p_{\varphi}(x))+  \delta(p_{\varphi}(x),p_{\varphi}(y)) + \delta(p_{\varphi}(y),y).$$
Notice that the sum in the right-hand-side of this inequality simplifies as
$$2 \cdot \mu ( C_1 \cup C_2 \cup \varphi_1 \Delta \varphi \cup \varphi \Delta \varphi_2) - \mu(C_1)- \mu(C_2).$$
But if $y \in Y$ is a point on which $\varphi_1$ and $\varphi_2$ differ, necessarily either $\varphi(y)$ and $\varphi_1(y)$ or $\varphi(y)$ and $\varphi_2(y)$ will differ as well, ie., $\varphi_1 \Delta \varphi_2 \subset \varphi_1 \Delta \varphi \cup \varphi \Delta \varphi_2$. Therefore,
$$\delta(x,z)+ \delta(z,y) \geq 2 \cdot \mu(C_1 \cup C_2 \cup \varphi_1 \Delta \varphi_2) - \mu(C_1)- \mu(C_2) = \delta(x,y).$$
Thus, $\delta$ satisfies the triangle inequality.
\end{proof}

\noindent
Another consequence of Lemma \ref{lem:pvarphi} is that leaves are convex. 

\begin{cor}\label{cor:leafconvex}
A leaf in $\mathfrak{W}$ is convex.
\end{cor}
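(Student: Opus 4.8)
The plan is to deduce convexity of a leaf $\mathfrak{W}(\varphi)$ directly from Lemma \ref{lem:pvarphi} together with the fact (Lemma \ref{lem:dmetric}, or rather Proposition \ref{prop:Fmedian}) that $\mathfrak{W}(\varphi)$, being isometric to $(\mathcal{F}(Y),d)$, is a median space and in particular a geodesic-like space in which intervals make sense. Concretely, I want to show that if $y_1,y_2 \in \mathfrak{W}(\varphi)$ and $z \in I(y_1,y_2)$ (the interval taken in all of $\mathfrak{W}$), then $z \in \mathfrak{W}(\varphi)$.

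First I would apply Lemma \ref{lem:pvarphi} twice, once to the pair $(z,y_1)$ and once to the pair $(z,y_2)$, both times using the leaf $\mathfrak{W}(\varphi)$. Since $y_1,y_2$ already lie in $\mathfrak{W}(\varphi)$, this gives
$$\delta(z,y_1) = \delta(z,p_\varphi(z)) + \delta(p_\varphi(z),y_1), \qquad \delta(z,y_2) = \delta(z,p_\varphi(z)) + \delta(p_\varphi(z),y_2).$$
Adding these and using that $z \in I(y_1,y_2)$, i.e. $\delta(y_1,z)+\delta(z,y_2) = \delta(y_1,y_2)$, together with the triangle inequality $\delta(p_\varphi(z),y_1) + \delta(p_\varphi(z),y_2) \geq \delta(y_1,y_2)$ inside the leaf, I obtain
$$\delta(y_1,y_2) = 2\,\delta(z,p_\varphi(z)) + \delta(p_\varphi(z),y_1) + \delta(p_\varphi(z),y_2) \geq 2\,\delta(z,p_\varphi(z)) + \delta(y_1,y_2),$$
which forces $\delta(z,p_\varphi(z)) = 0$. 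Since $\delta$ is a metric (the preceding corollary), this gives $z = p_\varphi(z) \in \mathfrak{W}(\varphi)$, so $\mathfrak{W}(\varphi)$ is convex.

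The argument is essentially routine once Lemma \ref{lem:pvarphi} is in hand; there is no real obstacle. The only point to be a little careful about is that the intervals in the definition of convexity are taken in the ambient space $\mathfrak{W}$, so one must make sure the triangle inequality used for $\delta(p_\varphi(z),y_1) + \delta(p_\varphi(z),y_2) \geq \delta(y_1,y_2)$ is the one already established for $(\mathfrak{W},\delta)$ (equivalently, its restriction to the leaf, which is the metric of $\mathcal{F}(Y)$). Everything else is bookkeeping with the two identities coming from Lemma \ref{lem:pvarphi}.
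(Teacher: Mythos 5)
Your proof is correct and follows essentially the same route as the paper: apply Lemma \ref{lem:pvarphi} to $z$ against each of the two endpoints in the leaf, add, use that $z$ lies in the interval and the triangle inequality among $p_\varphi(z),y_1,y_2$ to force $\delta(z,p_\varphi(z))=0$. The point you flag about which triangle inequality to use is harmless, since the restriction of $\delta$ to the leaf is the metric of $\mathcal{F}(Y)$ and $\delta$ itself is already known to be a metric on $\mathfrak{W}$.
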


\begin{proof}
Let $\varphi \in X^{(Y)}$ be a map, $x,y \in \mathfrak{W}(\varphi)$ two points, and $z \in I(x,y)$ a third point. As a consequence of Lemma \ref{lem:pvarphi}, 
$$\delta(x,y)=\delta(x,z)+ \delta(z,y) = \delta(x,p_{\varphi}(z))+ \delta(p_{\varphi}(z),y)+2 \delta(z, p_{\varphi}(z)).$$
On the other hand, we deduce from the triangle inequality that
$$\delta(x,y) \leq \delta(x,p_{\varphi}(z))+ \delta(p_{\varphi}(z),y).$$
Therefore, $\delta(z,p_{\varphi}(z))=0$, which means that $z$ belongs to the leaf $\mathfrak{W}(\varphi)$. 
\end{proof}

\noindent
Our second (and last) preliminary lemma studies when intervals and leaves intersect. 

\begin{lemma}\label{lem:intervalleaf}
Let $\varphi \in X^{(Y)}$ be a map, and $(C_1, \varphi_1),(C_2, \varphi_2) \in \mathfrak{W}$ two wreaths. The leaf $\mathfrak{W}(\varphi)$ intersects the interval between $(C_1, \varphi_1)$ and $(C_2, \varphi_2)$ if and only if $\varphi(y)$ belongs to $I(\varphi_1(y),\varphi_2(y))$ for every $y \in Y$. 
\end{lemma}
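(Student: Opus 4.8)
The plan is to reduce everything to the two characterisations we have already developed: the characterisation of intervals in $\mathfrak{W}$ will come from combining Lemma \ref{lem:pvarphi} (the "projection" identity) with Lemma \ref{lem:intervalFX} (the description of intervals inside $\mathcal{F}(Y)$, which is isometric to each leaf). First I would fix a wreath $(C,\varphi) \in \mathfrak{W}(\varphi)$ and compute $\delta((C_1,\varphi_1),(C,\varphi)) + \delta((C,\varphi),(C_2,\varphi_2))$ directly from the definition of $\delta$. Writing $d_X = \sum_{y\in Y} d(\varphi_i(y),\varphi(y))$ type sums, the pointwise contributions are exactly $\sum_{y} \bigl( d(\varphi_1(y),\varphi(y)) + d(\varphi(y),\varphi_2(y)) \bigr)$, which equals $\sum_y d(\varphi_1(y),\varphi_2(y))$ if and only if $\varphi(y) \in I(\varphi_1(y),\varphi_2(y))$ for every $y$; otherwise it is strictly larger. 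This already isolates the pointwise interval condition as a \emph{necessary} condition for $(C,\varphi)$ to lie in $I((C_1,\varphi_1),(C_2,\varphi_2))$, and shows it is the only thing that can fail "in the $X$-direction".

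Next I would handle the "$\mathcal{F}(Y)$-direction". Assuming $\varphi(y)\in I(\varphi_1(y),\varphi_2(y))$ for all $y$, the pointwise sums match up, and what remains of $\delta(x,z)+\delta(z,y) - \delta(x,y)$ is purely the measured-hyperplane part,
$$2\mu(C_1\cup C\cup\varphi_1\Delta\varphi) + 2\mu(C\cup C_2\cup\varphi\Delta\varphi_2) - 2\mu(C) - 2\mu(C_1\cup C_2\cup\varphi_1\Delta\varphi_2),$$
where I have already absorbed the $-\mu(C_1)-\mu(C_2)$ terms. The key point is that under the hypothesis $\varphi(y)\in I(\varphi_1(y),\varphi_2(y))$ one has the containments $\varphi_1\Delta\varphi \subset \varphi_1\Delta\varphi_2$, $\varphi\Delta\varphi_2 \subset \varphi_1\Delta\varphi_2$, and $\varphi_1\Delta\varphi_2 \subset (\varphi_1\Delta\varphi)\cup(\varphi\Delta\varphi_2)$ — the first two because a point where $\varphi$ agrees with $\varphi_1$ and with $\varphi_2$ forces $\varphi_1 = \varphi_2$ there, and these are what make $p_\varphi$ behave well. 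Because of this, choosing $C$ to be the convex hull of $C_1\cup C_2\cup\varphi_1\Delta\varphi_2$ intersected appropriately — or more cleanly, invoking Lemma \ref{lem:pvarphi} applied to $p_\varphi(x)$ and $p_\varphi(y)$ together with Lemma \ref{lem:intervalFX} for the leaf — produces a genuine median-interval point of the two projections, and that point lies in $I((C_1,\varphi_1),(C_2,\varphi_2))$. Concretely: set $x' = p_\varphi((C_1,\varphi_1))$, $y' = p_\varphi((C_2,\varphi_2))$; by Lemma \ref{lem:pvarphi}, $I(x,y)$ meets $\mathfrak{W}(\varphi)$ exactly when $I(x',y')$ (computed inside the leaf) is nonempty and the pointwise sums reconcile, and $I(x',y')$ inside $\mathcal{F}(Y)$ is always nonempty by Proposition \ref{prop:Fmedian}.

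For the converse, if some $y_0$ has $\varphi(y_0)\notin I(\varphi_1(y_0),\varphi_2(y_0))$, then the pointwise sum is strictly larger than $\sum_y d(\varphi_1(y),\varphi_2(y))$ by a positive amount, while the hyperplane part is always $\geq$ (this is Claim \ref{claim:11triangleinequ} applied with $C_1\cup\varphi_1\Delta\varphi$, $C\cup\text{stuff}$, etc., or just the triangle-inequality computation already done in the metric-space corollary). Hence $\delta(x,z)+\delta(z,y) > \delta(x,y)$ strictly, so $z \notin I(x,y)$, regardless of $C$. \textbf{The main obstacle} I anticipate is bookkeeping the hyperplane term cleanly: one must verify that, under the pointwise interval hypothesis, the indicator identity
$$\mathds{1}_{\mathcal{H}(C_1\cup C_2\cup\varphi_1\Delta\varphi_2)} = \mathds{1}_{\mathcal{H}(C_1\cup C\cup\varphi_1\Delta\varphi)} + \mathds{1}_{\mathcal{H}(C\cup C_2\cup\varphi\Delta\varphi_2)} - \mathds{1}_{\mathcal{H}(C)}$$
can be made to hold for a suitable choice of $C$ (and that this is the right $C$ — namely a point already known to be in the leaf's interval). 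This is where the proof of Lemma \ref{lem:pvarphi}, which shows $p_\varphi(x)$ lies on a geodesic to every point of the leaf, does most of the work, so I expect the argument to be short once it is set up as "$I(x,y)\cap\mathfrak{W}(\varphi)\neq\emptyset \iff p_\varphi(x)\in I(x,y)$ and $p_\varphi(y)\in I(x,y)$, then apply the leaf case."
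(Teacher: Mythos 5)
Your proposal is correct and takes essentially the same route as the paper: it reduces to the projections $p_{\varphi}(x)$, $p_{\varphi}(y)$ via Lemma \ref{lem:pvarphi} (this is exactly the paper's intermediate Fact), splits $\delta$ into its measured-hyperplane part and its pointwise part, gets necessity from the pointwise triangle inequality together with the nonnegativity of the hyperplane part, and gets sufficiency from the equality $\varphi_1 \Delta \varphi \cup \varphi \Delta \varphi_2 = \varphi_1 \Delta \varphi_2$ under the interval hypothesis. The only slip is in your justification of $\varphi_1 \Delta \varphi \subset \varphi_1 \Delta \varphi_2$, which should read: at a point where $\varphi_1$ and $\varphi_2$ agree, $I(\varphi_1(y),\varphi_2(y))$ is a single point, so the hypothesis forces $\varphi$ to agree with both there.
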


\begin{proof}
For convenience, set $x=(C_1,\varphi_1)$ and $y= (C_2, \varphi_2)$. The interval $I(x,y)$ intersects the leaf $\mathfrak{W}(\varphi)$ if and only if there exists some $z \in \mathfrak{W}(\varphi)$ satisfying $\delta(x,y)=\delta(x,z)+ \delta(z,y)$. This equality is equivalent to
$$\delta(x,y)= \delta(x,p_{\varphi}(x))+ \delta(p_{\varphi}(x),z)+ \delta(z,p_{\varphi}(y))+ \delta(p_{\varphi}(y),y).$$
On the other hand, we know from the triangle inequality that 
$$\delta(x,y) \leq \delta(x,p_{\varphi}(x))+ \delta(p_{\varphi}(x),p_{\varphi}(y))+ \delta(p_{\varphi}(y),y),$$
hence $\delta(p_{\varphi}(x),z)+ \delta(z,p_{\varphi}(y))=\delta(p_{\varphi}(x),p_{\varphi}(y))$. It follows that

\begin{fact}\label{fact:intervalleaf}
The interval $I(x,y)$ intersects the leaf $\mathfrak{W}(\varphi)$ if and only if
$$\delta(x,y)= \delta(x,p_{\varphi}(x))+ \delta(p_{\varphi}(x),p_{\varphi}(y))+ \delta(p_{\varphi}(y),y).$$
\end{fact}

\noindent
This equality simplifies as
\begin{align}\label{eq:intervalleaf}
2 \cdot \mu( C_1 \cup C_2 \cup \varphi_1 \Delta \varphi_2) &+ \sum\limits_{y \in Y} d(\varphi_1(y),\varphi_1(y)) = 2 \cdot \mu( C_1 \cup C_2 \cup \varphi_1 \Delta \varphi \cup \varphi \Delta \varphi_2) \nonumber \\ & \qquad {} + \sum\limits_{y \in Y} \left( d(\varphi_1(y), \varphi(y))+d(\varphi(y), \varphi_1(y)) \right)
\end{align}
Suppose that $I(x,y)$ intersects $\mathfrak{W}(\varphi)$, so that the previous equality holds. From the triangle inequality, it follows that
$$\mu(C_1 \cup C_2 \cup \varphi_1 \Delta \varphi \cup \varphi \Delta \varphi_2) \leq \mu(C_1 \cup C_2 \cup \varphi_1 \Delta \varphi_2).$$
On the other hand, $\varphi_1 \Delta \varphi_2 \subset \varphi_1 \Delta \varphi \cup \varphi \Delta \varphi_2$. Indeed, if $y \in Y$ is a point at which $\varphi_1$ and $\varphi_2$ differ, necessarily $\varphi$ must differ at $y$ from either $\varphi_1$ or $\varphi_2$. Therefore,
$$\mu(C_1 \cup C_2 \cup \varphi_1 \Delta \varphi \cup \varphi \Delta \varphi_2) \geq \mu(C_1 \cup C_2 \cup \varphi_1 \Delta \varphi_2).$$
It follows that
$$\mu(C_1 \cup C_2 \cup \varphi_1 \Delta \varphi \cup \varphi \Delta \varphi_2) = \mu(C_1 \cup C_2 \cup \varphi_1 \Delta \varphi_2),$$
so that the equation \ref{eq:intervalleaf} provides
$$\sum\limits_{y \in Y} \left( d( \varphi_1(y), \varphi(y)) + d(\varphi(y),\varphi_2(y))- d(\varphi_1(y), \varphi_2(y)) \right)=0$$
Thus, for every $y \in Y$, the equality $d( \varphi_1(y), \varphi(y)) + d(\varphi(y),\varphi_2(y))= d(\varphi_1(y), \varphi_2(y))$ hods, which means that $\varphi(y) \in I(\varphi_1(y), \varphi_2(y))$. 

\medskip \noindent
Conversely, suppose that $\varphi(y) \in I( \varphi_1(y), \varphi_2(y))$ for every $y \in Y$. In particular, it implies that
$$\varphi_1 \Delta \varphi \cup \varphi \Delta \varphi_2 \subset \varphi_1 \Delta \varphi_2.$$
Indeed, if $\varphi_1$ and $\varphi_2$ agree at some $y \in Y$, then $\varphi(y) \in I(\varphi_1(y),\varphi_2(y))= \{ \varphi_1(y)=\varphi_2(y) \}$, so that $\varphi$ necessarily agrees with $\varphi_1$ and $\varphi_2$ at $y$. On the other hand, we already know that the converse inclusion holds (without any assumption), so we deduce that
$$\mu(C_1 \cup C_2 \cup \varphi_1 \Delta \varphi_2) = \mu(C_1 \cup C_2 \cup \varphi_1 \Delta \varphi \cup \varphi \Delta \varphi_2).$$
Because our assumption also implies that 
$$\sum\limits_{y \in Y} \left( d( \varphi_1(y), \varphi(y)) + d(\varphi(y),\varphi_2(y)) \right) = \sum\limits_{y \in Y} d(\varphi_1(y), \varphi_2(y)),$$
we conclude that the equation \ref{eq:intervalleaf} holds, and finally that the interval $I(x,y)$ intersects the leaf $\mathfrak{W}(\varphi)$.  
\end{proof}

\begin{proof}[Proof of Theorem \ref{thm:Wdeltamedian}.]
Let $x=(C_1,\varphi_1)$, $y=(C_2, \varphi_2)$ and $z=(C_3,\varphi_3)$ be three wreaths. Suppose that these three points of $\mathfrak{W}$ admit a median point $m=(C, \varphi) \in \mathfrak{W}$. It follows from Lemma \ref{lem:intervalleaf} that, for every $y \in Y$, $\varphi(y)$ belongs to $I(\varphi_1(y), \varphi_2(y)) \cap I(\varphi_2(y), \varphi_3(y)) \cap I(\varphi_1(y), \varphi_3(y))$, which means that $\varphi(y)$ is the median point of $\varphi_1(y)$, $\varphi_2(y)$ and $\varphi_3(y)$ in $X$. So $\varphi$ is uniquely determined. Next, because the interval $I(x,y)$ intersects the leaf $\mathfrak{W}(\varphi)$, we deduce from Fact \ref{fact:intervalleaf} that
$$\delta(x,y)= \delta(x,p_{\varphi}(x))+ \delta(p_{\varphi}(x),p_{\varphi}(y))+ \delta(p_{\varphi}(y),y).$$
On the other hand,
$$\delta(x,y)= \delta(x,m)+ \delta(m,y) = \delta(x,p_{\varphi}(x))+ \delta(p_{\varphi}(x),m)+ \delta(m,p_{\varphi}(y))+ \delta(p_{\varphi}(y),y).$$
Combining these two equalities yields
$$\delta(p_{\varphi}(x),p_{\varphi}(y))= \delta(p_{\varphi}(x),m)+ \delta(m,p_{\varphi}(y)).$$
We show similarly that
$$\delta(p_{\varphi}(x),p_{\varphi}(z))= \delta(p_{\varphi}(x),m)+ \delta(m,p_{\varphi}(z))$$
and
$$\delta(p_{\varphi}(y),p_{\varphi}(z))= \delta(p_{\varphi}(y),m)+ \delta(m,p_{\varphi}(z)).$$
Therefore, $m$ is also a median point of $p_{\varphi}(x)$, $p_{\varphi}(y)$ and $p_{\varphi}(z)$. Because the leaf $\mathfrak{W}(\varphi)$ is convex in $\mathfrak{W}$, according to Corollary \ref{cor:leafconvex}, and is a median space on its own right according to Proposition \ref{prop:Fmedian}, it follows that $p_{\varphi}(x)$, $p_{\varphi}(y)$ and $p_{\varphi}(z)$ admit a unique median point. Thus, we have proved that $x$, $y$ and $z$ admits at most one median point.

\medskip \noindent
Now, set $\varphi : y \mapsto m(\varphi_1(y), \varphi_2(y),\varphi_3(y))$ and let $m \in \mathfrak{W}(\varphi)$ denote the (unique) median point of $p_{\varphi}(x)$, $p_{\varphi}(y)$ and $p_{\varphi}(z)$. We want to prove that $m$ is a median point of $x$, $y$ and $z$. According to Lemma \ref{lem:intervalleaf}, the interval $I(x,y)$ intersects the leaf $\mathfrak{W}(\varphi)$, so that we deduce from Fact \ref{fact:intervalleaf} that
$$\begin{array}{lcl} \delta(x,y) & = & \delta(x,p_{\varphi}(x))+ \delta(p_{\varphi}(x),p_{\varphi}(y))+ \delta(p_{\varphi}(y),y) \\ \\ & =& \delta(x,p_{\varphi}(x))+ \delta(p_{\varphi}(x),m)+ \delta(m, p_{\varphi}(y))+ \delta(p_{\varphi}(y),y) \\ \\ & = & \delta(x,m)+ \delta(m,y) \end{array}$$
Similarly, we show that
$$\delta(x,z)=\delta(x,m)+\delta(m,z) \ \text{and} \ \delta(y,z)= \delta(y,m)+ \delta(m,z).$$
Thus, $m$ belongs to $I(x,y) \cap I(y,z) \cap I(x,z)$, ie., $m$ is a median point of $x$, $y$ and $z$.
\end{proof}

\begin{remark}
From the previous proof, we get a precise description of the median point $(M, \varphi)$ of three wreaths $(C_1, \varphi_1)$, $(C_2, \varphi_2)$ and $(C_3, \varphi_3)$. Indeed, 
$$\varphi : y \mapsto m( \varphi_1(y), \varphi_2(y), \varphi_3(y))$$
and $M$ is the convex hull of
$$\{ m(x_1,x_2,x_3) \mid x_i \in C_i \cup \varphi_i \Delta \varphi, \ i=1,2,3 \}.$$
\end{remark}

\subsection{Constructing median graphs}

\noindent
Let $X,Y$ be two median graphs and let $1 \in X$ be a basepoint. The distances between vertices of $X$ and $Y$ define two discrete median metrics, so that the distance $\delta$ on the diadem product $(X,1) \circledast Y$ turns out to be discrete as well, and median according to Theorem \ref{thm:Wdeltamedian}. Thus, $(X,1) \circledast Y$ can be thought of as a graph by linking any two points of $(X,1) \circledast Y$ at distance one appart by an edge, but does the resulting length metric coincide with $\delta$? The next lemma shows that this is the case, making $(X,1) \circledast Y$ a median graph. 

\begin{lemma}\label{lem:Wcube}
If $X$ and $Y$ are two median graphs, then $(X,1) \circledast Y$ is a median graph.
\end{lemma}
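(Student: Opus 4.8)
\textbf{Proof plan for Lemma~\ref{lem:Wcube}.}
The plan is to show that $(\mathfrak{W},\delta)$ is connected as a graph (where vertices at $\delta$-distance $1$ are joined by edges) and that the induced path metric agrees with $\delta$; combined with Proposition~\ref{prop:Wdeltamedian}, which already tells us $(\mathfrak{W},\delta)$ is median, and with the characterisation of median graphs as one-skeletons of CAT(0) cube complexes \cite{Roller, mediangraphs}, this gives the statement. The first step is to check that $\delta$ takes values in $\mathbb{N}$ when $X$ and $Y$ are median graphs: the terms $d(\varphi_1(y),\varphi_2(y))$ are graph distances, hence integers, and the quantity $2\mu(C_1\cup C_2\cup \varphi_1\Delta\varphi_2)-\mu(C_1)-\mu(C_2)$ is the integer-valued quantity $2\cdot \#\mathcal{H}(C_1\cup C_2\cup\varphi_1\Delta\varphi_2)-\#\mathcal{H}(C_1)-\#\mathcal{H}(C_2)$ (all hyperplane counts are finite since the relevant convex hulls are finitely generated). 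One should also verify this last quantity is $\ge 0$, which follows from Claim~\ref{claim:11triangleinequ} applied suitably, or directly from the fact that $\delta$ restricted to a leaf is the metric $d$ on $\mathcal{F}(Y)$ together with $\mathcal{H}(C_i)\subseteq\mathcal{H}(C_1\cup C_2\cup\varphi_1\Delta\varphi_2)$.

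The core of the argument is to build, between any two wreaths $(C_1,\varphi_1)$ and $(C_2,\varphi_2)$, a path of length exactly $\delta((C_1,\varphi_1),(C_2,\varphi_2))$ consisting of unit steps. Using Lemma~\ref{lem:pvarphi} with $\varphi=\varphi_2$, we have $\delta((C_1,\varphi_1),(C_2,\varphi_2))=\delta((C_1,\varphi_1),p_{\varphi_2}(C_1,\varphi_1))+\delta(p_{\varphi_2}(C_1,\varphi_1),(C_2,\varphi_2))$, so it suffices to treat two cases: (a) moving within a fixed leaf, and (b) moving along a ``vertical'' geodesic from $(C_1,\varphi_1)$ to $p_{\varphi_2}(C_1,\varphi_1)=(\overline{C_1\cup\varphi_1\Delta\varphi_2},\varphi_2)$. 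For case (a), the leaf $\mathfrak{W}(\varphi)$ is isometric to $\mathcal{F}(Y)$ by Corollary~\ref{cor:leafconvex} and the isometry $C\mapsto(C,\varphi)$, so one reduces to showing that $(\mathcal{F}(Y),d)$ is itself a median graph; here a unit step is adding or removing a single hyperplane from a finitely generated convex subspace of $Y$ (concretely: enlarging $C$ across one hyperplane intersecting $\overline{C_1\cup C_2}$ but not $C$, or shrinking $C$ across one hyperplane of $\mathcal{H}(C)$ separating it from the target, keeping convexity and finite generation at each stage), and one checks each such step changes $d$ by exactly $1$ and that $d(C,C')$ many such steps connect $C$ to $C'$ — this uses Lemma~\ref{lem:intervalFX} to guarantee the intermediate subspaces lie on a geodesic. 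For case (b), one first adjusts the values $\varphi_1(y)$ to $\varphi_2(y)$ one graph-edge at a time (each such elementary move changes the $\sum_y d(\varphi_1(y),\varphi_2(y))$ term by $1$ and enlarges the convex-hull bookkeeping appropriately, matching the change in $\delta$, as in the warm-up computation of Section~\ref{section:warmup}), interleaved with the leaf-type moves needed to realise $C\rightsquigarrow\overline{C_1\cup\varphi_1\Delta\varphi_2}$; the additivity of $\delta$ along this concatenation follows from Lemma~\ref{lem:pvarphi} and Fact~\ref{fact:intervalleaf}.

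The main obstacle I expect is the bookkeeping in case~(b): one must order the elementary moves (changing lamp values versus enlarging/shrinking the convex subspace) so that \emph{every} intermediate wreath lies on a $\delta$-geodesic between the two endpoints, i.e.\ so that the total length of the concatenated path is exactly $\delta$ rather than merely an upper bound. The clean way to do this is to fix a hyperplane at a time: each hyperplane $J\in\mathcal{H}(C_1\cup C_2\cup\varphi_1\Delta\varphi_2)$ contributes a controlled number of unit steps (two if $J$ must be both added and removed, one if it is only added or only removed, matching the formula $2\cdot\#\mathcal{H}(C_1\cup C_2\cup\varphi_1\Delta\varphi_2)-\#\mathcal{H}(C_1)-\#\mathcal{H}(C_2)$), plus the $\sum_y d(\varphi_1(y),\varphi_2(y))$ ``lamp'' steps, and one argues by the interval characterisations (Lemma~\ref{lem:intervalFX}, Lemma~\ref{lem:intervalleaf}) that processing hyperplanes in a suitable order keeps us inside $I((C_1,\varphi_1),(C_2,\varphi_2))$ throughout. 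Once connectivity and agreement of the path metric with $\delta$ are established, Proposition~\ref{prop:Wdeltamedian} finishes the proof that $(\mathfrak{W},\delta)$ is a median graph.
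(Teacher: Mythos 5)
Your plan is correct and is essentially the paper's argument: exhibit explicit unit-step paths between any two wreaths whose total length equals $\delta$, conclude that the path metric of the unit-distance graph on $\mathfrak{W}$ coincides with $\delta$, and then invoke Proposition~\ref{prop:Wdeltamedian}. The bookkeeping you flag as the main obstacle is sidestepped in the paper by a fixed order of moves --- grow $C_1$ one vertex at a time to the convex hull of $C_1\cup C_2\cup\varphi_1\Delta\varphi_2$ inside the leaf of $\varphi_1$, then perform all the lamp moves, then shrink down to $C_2$ inside the leaf of $\varphi_2$ --- so the total length is
$\#\bigl(\mathcal{H}(C_1\cup C_2\cup\varphi_1\Delta\varphi_2)\setminus\mathcal{H}(C_1)\bigr)+\#\bigl(\mathcal{H}(C_1\cup C_2\cup\varphi_1\Delta\varphi_2)\setminus\mathcal{H}(C_2)\bigr)+\sum_{y\in Y} d(\varphi_1(y),\varphi_2(y))=\delta$
by direct count, with no need to verify that the intermediate wreaths lie on a geodesic or to interleave the two kinds of moves.
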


\begin{proof}
For short, we set $\mathfrak{W} = (X,1) \circledast Y$. Let $(C_1, \varphi_1),(C_2,\varphi_2) \in \mathfrak{W}$ be two wreaths. Define a sequence $R_1, \ldots , R_p \in \mathcal{F}(Y)$ of convex subcomplexes in the following way:
\begin{itemize}
	\item $R_1=C_1$;
	\item if $n \geq 2$ and $C_1 \cup C_2 \cup \varphi_1 \Delta \varphi_2 \subsetneq R_n$, $R_{n+1}$ is the convex hull of $R_n \cup \{x \}$, where $x$ is a vertex of the convex hull of $C_1 \cup C_2 \cup \varphi_1 \Delta \varphi_2$ which does not belong $R_n$ but which is adjacent to one of its vertices.
\end{itemize}
Notice that $(R_i,\varphi_1)$ and $(R_{i+1},\varphi_1)$ are at distance one appart in $\mathfrak{W}$ for every $1 \leq i \leq p-1$, and that $p= \# \mathcal{H}(C_1 \cup C_2 \cup \varphi_1 \Delta \varphi_2) \backslash \mathcal{H}(C_1)$. Similarly, define a sequence $S_1, \ldots, S_q \in \mathcal{F}(Y)$ from the convex hull of $C_1 \cup C_2 \cup \varphi_1 \Delta \varphi_2$ to $C_2$ such that $(S_i,\varphi_2)$ and $(S_{i+1},\varphi_2)$ are at distance one apart in $\mathfrak{W}$ for every $1 \leq i \leq q-1$ and such that $q= \# \mathcal{H}(C_1 \cup C_2 \cup \varphi_1 \Delta \varphi_2) \backslash \mathcal{H}(C_2)$. Finally, let $\psi_1, \ldots, \psi_r \in X^{(Y)}$ be a sequence of maps such that $\psi_1= \varphi_1$, $\psi_r=\varphi_2$, $s= \sum\limits_{y \in Y} d(\varphi_1(y),\varphi_2(y))$, and such that, for every $1 \leq i \leq r-1$, $\psi_i$ and $\psi_{i+1}$ differ at a single vertex $y$ and $\psi_i(y)$ and $\psi_{i+1}(y)$ are adjacent. Notice that $(R_p,\psi_i)$ and $(R_p,\psi_{i+1})$ are at distance one appart in the $\mathfrak{W}$ for every $1 \leq i \leq r-1$. Thus,
$$(R_1, \varphi_1), \ldots, (R_p,\varphi_1)=(R_p,\psi_1), \ldots, (R_p,\psi_r)=(S_1,\varphi_2), \ldots, (S_q, \varphi_2)$$
is a path in $\mathfrak{W}$, thought of as a graph, from $(C_1, \varphi_1)$ to $(C_2, \varphi_2)$ and of length
$$\# \mathcal{H}(C_1 \cup C_2 \cup \varphi_1 \Delta \varphi_2) \backslash \mathcal{H}(C_1) + \# \mathcal{H}(C_1 \cup C_2 \cup \varphi_1 \Delta \varphi_2) \backslash \mathcal{H}(C_1) + \sum\limits_{y \in Y} d(\varphi_1(y),\varphi_2(y)),$$
which is precisely the distance between $(C_1, \varphi_1)$ and $(C_2, \varphi_2)$. Consequently, the length distance on $\mathfrak{W}$ thought of as a graph coincides with $\delta$. Because we know from Theorem~\ref{thm:Wdeltamedian} that $\delta$ is a median distance, it follows that $\mathfrak{W}$ is a median graph. 
\end{proof}

\section{Coarse embeddings into $\ell^1$-spaces}

\noindent
In this section, our goal is to show that, if two finitely generated groups coarsely embed into median spaces, then we can combine these embeddings in order to coarsely embed the wreath product of our two groups into the diadem product of two corresponding median spaces. In fact, we will be able to work with graphs instead of groups thanks to the following definition:

\begin{definition}
Let $X,Y$ be two graphs and $1 \in X$ a basepoint. The \emph{wreath product} $(X,1) \wr Y$ is the graph whose vertices are the pairs $(\varphi,y)$ where $y \in Y$ and where $\varphi : Y \to X$ satisfies $\varphi(y)=1$ for all but finitely many $y \in Y$ (written $\varphi  \in X^{(Y)}$ in the sequel), and whose edges link two vertices $(\varphi_1,y_1), (\varphi_2,y_2)$ if either $\varphi_1=\varphi_2$ and $y_1,y_2$ are adjacent in $Y$ or $y_1=y_2$ and $\varphi_1,\varphi_2$ only differ at $y_1=y_2$ with $\varphi(y_1),\varphi(y_2)$ adjacent in $X$. 
\end{definition}

\noindent
Observe that, for all $(\varphi_1,y_1),(\varphi_2,y_2) \in (X,1) \wr Y$, one has
$$d((\varphi_1,y_1),(\varphi_2,y_2)) = \mathrm{TS}(y_1, \varphi_1 \Delta \varphi_2,y_2) + \sum\limits_{y \in Y} d(\varphi_1(y), \varphi_2(y))$$
where $\varphi_1 \Delta \varphi_2$ denotes the set of all points in $Y$ where $\varphi_1,\varphi_2$ differ and where $\mathrm{TS}(a,S,b)$ denotes the shortest length of a path that starts from a point $a$, that visits all the points in a set $S$, and that ends at a point $b$. Also, observe that, given two groups $G,H$ and two generating sets $R \subset G, S \subset H$, we have
$$\mathrm{Cayl}(G \wr H, R\cup S)= (\mathrm{Cayl}(G,R),1) \wr \mathrm{Cayl}(H,S),$$ 
justifying our terminology.

\begin{definition}
Let $G,H$ be two graphs, $X,Y$ two median spaces, $1 \in G$ a basepoint, and $\Phi : G \to X, \Psi : H \hookrightarrow Y$ two maps with $\Psi$ injective. The \emph{wreath product} $\Phi \wr \Psi : (G,1) \circledast H \to (X,\Phi(1)) \wr Y$ is
$$(c,h) \mapsto \left( \{\Psi(h)\}, y \mapsto \left\{ \begin{array}{cl} y & \text{if $y \notin \mathrm{Im}(\Psi)$} \\ \Phi(c(\Psi^{-1}(y))) & \text{otherwise} \end{array} \right. \right).$$
\end{definition}

\noindent
In the next subsections, we are going to show that, if $\Phi$ and $\Psi$ preserve the metrics of $G,H$, then so does $\Phi \wr \Psi$.

\subsection{Wreath products of coarse embeddings}

\noindent
Until the proof of Theorem \ref{thm:WreathCoarse} below, we fix two graphs $G,H$, a basepoint $1 \in G$, two median spaces $X,Y$, and two injective maps $\Phi : G \hookrightarrow X$, $\Psi : H \hookrightarrow Y$. Our goal is to prove the following statement:

\begin{prop}\label{prop:Coarse}
Assume that $H$ is uniformly locally finite. If $\Phi$ and $\Psi$ are coarse embeddings, then so is $\Phi \wr \Psi$.
\end{prop}

\noindent
We begin by proving two preliminary lemmas.

\begin{lemma}\label{lem:DistEstimation}
Ror all $(c_1,h_1),(c_2,h_2) \in (G,1) \wr H$,
$$\delta \left( \Phi \wr \Psi (c_1,h_1), \Phi \wr \Psi (c_2,h_2) \right) = 2 \mu \left( \Psi ( \{h_1,h_2\} \cup c_1 \Delta c_2) \right) + \sum\limits_{h \in H} d \left( \Phi(c_1(h)), \Phi(c_2(h)) \right).$$ 
\end{lemma}

\begin{proof}
For all $(c_1,h_1),(c_2,h_2) \in (G,1) \wr H$, we have
$$\sum\limits_{y \in \mathrm{Im}(\Psi)} d\left( \Phi(c_1(\Psi^{-1}(y))), \Phi(c_2(\Psi^{-1}(y))) \right) = \sum\limits_{h \in H} d \left( \Phi(c_1(h)), \Phi(c_2(h)) \right)$$
and
$$\begin{array}{lcl} \Phi c_1 \Psi^{-1} \Delta \Phi c_2 \Psi^{-1} & = & \left\{ y \in Y \mid \Phi \left( c_1 (\Psi^{-1}(y)) \right) \neq \Phi \left( c_2(\Psi^{-1}(y)) \right) \right\} \\ \\ & = & \left\{ y \in Y \mid c_1 \left( \Psi^{-1}y \right) \neq c_2 \left( \Psi^{-'1}(y) \right) \right\} = \Psi(c_1\Delta c_2)\end{array}$$
where we have denoted $\Phi c_i \Psi^{-1} : y \mapsto \left\{ \begin{array}{cl} y & \text{if $y \notin \mathrm{Im}(\Psi)$} \\ \Phi(c_i(\Psi^{-1}(y))) & \text{otherwise} \end{array} \right.$ for $i=1,2$ by abuse of notation. These two observations, applied to the definition of $\delta$, leads to the desired equality. 
\end{proof}

\begin{lemma}\label{lem:Lipschitz}
$\Phi \wr \Psi$ is Lipschitz.
\end{lemma}

\begin{proof}
Let $(c_1,h_1),(c_2,h_2) \in (G,1) \wr H$ be two adjacent vertices. Either $c_1=c_2$ and $h_1,h_2$ are adjacent in $H$, which implies according to Lemma \ref{lem:DistEstimation} that 
$$\delta \left( \Phi \wr \Psi (c_1,h_1), \Phi \wr \Psi (c_2,h_2) \right) = 2 d \left( \Psi(h_1), \Psi(h_2) \right) \leq C_1$$
for some uniform constant $C_1$; or $c_1,c_2$ differ only at $h_1=h_2$ and taking adjacent values, which implies according to Lemma \ref{lem:DistEstimation} that
$$\delta \left( \Phi \wr \Psi (c_1,h_1), \Phi \wr \Psi (c_2,h_2) \right) = d \left( \Phi(c_1(h_1)), \Phi(c_2(h_1)) \right) \leq C_2$$
for some uniform constant $C_2$. Therefore, we have
$$\delta \left( \Phi \wr \Psi (c_1,h_1), \Phi \wr \Psi (c_2,h_2) \right) \leq \min(C_1,C_2) \cdot d \left( (c_1,h_1),(c_2,h_2) \right)$$
for all $(c_1,h_1),(c_2,h_2) \in (G,1) \wr H$, concluding the proof of our lemma.
\end{proof}

\begin{proof}[Proof of Proposition \ref{prop:Coarse}.]
Assume that $\delta \left( \Phi \wr \Psi (c_1,h_1), \Phi \wr \Psi (c_2,h_2) \right) \leq R$ for some $R$. As a consequence of Lemma~\ref{lem:DistEstimation}, for all $(c_1,h_1),(c_2,h_2) \in (G,1)\wr H$, we have 
$$d(\Phi(c_1(h)),\Phi(c_2(h))) \leq R,$$ 
hence $d(c_1(h),c_2(h)) \leq C_1$ for some uniform constant $C_1$; we also have
$$\mathrm{diam}\left( \Psi(\{h_1,h_2\} \cup c_1 \Delta c_2) \right) \leq \mu\left( \Psi( \{h_1,h_2\} \cup c_1 \Delta c_2) \right) \leq R,$$
hence $\mathrm{diam}\left( \Psi(\{h_1,h_2\} \cup c_1 \Delta c_2) \right) \leq C_2$ for some uniform constant $C_2$. Consequently,
$$d((c_1,h_1),(c_2,h_2)) = \mathrm{TS}(h_1, c_1 \Delta c_2, h_2) + \sum\limits_{h \in c_1\Delta c_2} d(c_1(h),c_2(h))$$
is bounded above by a constant that depends only on $C_1$, $C_2$ and the maximal degree of a vertex in $H$. Together with Lemma \ref{lem:Lipschitz}, this concludes the proof that $\Phi \wr \Psi$ is a coarse embedding.
\end{proof}

\begin{proof}[Proof of Theorem \ref{thm:WreathCoarse}.]
Let $G,H$ be two graphs and $1 \in G$ a basepoint. If $G \wr H$ coarsely embeds in a Hilbert space, then so do $G,H$ since they isometrically embed in $G \wr H$. Conversely, assume that $G,H$ coarse embed in Hilbert spaces. It follows from \cite{MR2214573} that there exist two coarse embeddings $\Phi_0 : G \to X_0$ and $\Psi_0 : H \to Y_0$ in $\ell^1$-spaces, and so in median spaces. We can make $\Phi_0$ injective in the following way. Let $X$ denote the metric space obtained from $X_0$ by gluing, for every $x \in X_0$, the origins of $|\Phi_0^{-1}(x)|$ unit segments $[0,1]$ at $x$. Next, define $\Phi : G \to X$ in such a way that, for every $x \in X_0$, $\Phi$ sends the points in $\Phi_0^{-1}(x)$ to pairwise distinct endpoints of the new segments. Then $X$ is a median space containing $X_0$ as a convex subspace and $\Phi : G \hookrightarrow X$ is an injective coarse embedding. Similarly, we construct an injective coarse embedding to a median space $\Psi : H \hookrightarrow Y$ from $Y_0$ and $\Psi_0 : H \to Y$. We deduce from Proposition \ref{prop:Coarse} that $\Phi \wr \Psi$ defines a coarse embedding from $(G,1) \wr H$ to the median space $(X, \Phi(1)) \circledast Y$. As a median space always isometrically embeds in an $\ell^1$-space, which itself coarse embeds in an $\ell^2$-space, we conclude that $(G,1) \wr H$ coarsely embeds in a Hilbert space. 
\end{proof}

\subsection{A word about $\ell^1$-compressions}

\noindent
Until the proof of Theorem \ref{thm:Compression} below, we fix two graphs $G,H$, a basepoint $1 \in G$, two median spaces $X,Y$, and two injective coarse embeddings $\Phi : G \hookrightarrow X$, $\Psi : H \hookrightarrow Y$. Our goal is to prove the following statement:

\begin{prop}\label{prop:Compression}
Assume that there exist $\gamma,C>0$ such that
$$\mu \left( \{a,b\} \cup S \right) + |S| \geq C \cdot \mathrm{TS}(a,S,b)^\gamma \text{ for all $a,b \in Y$ and $S \subset Y$}.$$
If $\Phi, \Psi$ have respectively compressions $\geq \alpha, \geq \beta$, then $\Phi \wr \Psi$ has compression $\geq \gamma \min(\alpha, \beta)$. 
\end{prop}

\begin{proof}
We already know from Lemma \ref{lem:Lipschitz} that $\Phi \wr \Psi$ is Lipschitz. Let $\epsilon>0$ be smaller than the smallest distance between two distinct points in $\mathrm{Im}(\Psi)$. For convenience, we assume that $\epsilon \leq \min(2,2^\beta/C)$. Notice that, for all $a,b \in \mathrm{Im}(\Psi)$ and $S \subset \mathrm{Im}(\Psi)$ finite, we have
$$\begin{array}{lcl} 2 + \mu (\{a,b\} \cup S) + \epsilon|S| & \geq & C \epsilon \cdot \mathrm{TS}(a,S,b)^\beta \geq \frac{C \epsilon}{2^\beta} \left( \mathrm{TS}(a,S,b) + 2\epsilon |S| \right)^\beta \\ \\ & \geq & \frac{C \epsilon}{2^\beta} \left( \mathrm{TS}(a,S,b) + \epsilon |S| \right)^\beta \end{array}$$
where the second inequality is justified by $\mathrm{TS}(a,S,b) \geq (|S|+1) \epsilon \geq 2 \epsilon |S|$. 
According to Lemma~\ref{lem:DistEstimation} and the previous observation, for all $(c_1,h_1),(c_2,h_2) \in (G,1) \wr H$ we have
$$\delta \left( \Phi \wr \Psi (c_1,h_1), \Phi \wr \Psi (c_2,h_2) \right) =  2 \mu \left( \Psi ( \{h_1,h_2\} \cup c_1 \Delta c_2) \right) + \sum\limits_{h \in H} d \left( \Phi(c_1(h)), \Phi(c_2(h)) \right)$$
$$\begin{array}{cl} = & \displaystyle 2 \mu (\Psi(\{h_1,h_2\} \cup c_1 \Delta c_2)) + \epsilon |c_1 \Delta c_2| + \sum\limits_{h \in c_1\Delta c_2} \left( d(\Phi(c_1(h)),\Phi(c_2(h))) - \epsilon \right) \\ \\ \geq & \displaystyle \frac{C \epsilon}{2^\beta} \left( \mathrm{TS}(\Psi(h_1), \Psi(c_1 \Delta c_2), \Psi(h_2)) + \epsilon |S| \right)^\gamma + \sum\limits_{h \in c_1 \Delta c_2} \left( d(\Phi(c_1(h)),\Phi(c_2(h))) - \epsilon \right)^\gamma \\ \\  \geq & \displaystyle \frac{C\epsilon}{2^\beta} \left[ \mathrm{TS}(\Psi(h_1), \Psi(c_1\Delta c_2), \Psi(h_2)) + \sum\limits_{h\in c_1 \Delta c_2} d\left( \Phi(c_1(h)), \Phi(c_2(h)) \right) \right]^\gamma \\ \\ \geq & \displaystyle  \frac{C\epsilon}{2^\beta} \left[ C_2 \cdot \mathrm{TS}(h_1,c_1 \Delta c_2,h_2)^\beta+ C_1 \cdot \left( \sum\limits_{h \in c_1\Delta c_2} d(c_1(h),c_2(h)) \right)^\alpha \right]^\gamma \\ \\  \geq & \displaystyle \frac{C\epsilon}{2^\beta} \min(C_1,C_2)^\gamma \left[\mathrm{TS}(h_1,c_1 \Delta c_2,h_2)+  \sum\limits_{h \in c_1\Delta c_2} d(c_1(h),c_2(h))  \right]^{\gamma \min(\alpha,\beta)}  \\ \\  \geq & \displaystyle \frac{C\epsilon}{2^\beta} \min(C_1,C_2)^\gamma \cdot d((c_1,h_1),(c_2,h_2))^{\gamma \min(\alpha,\beta)} \end{array}$$
for some uniform constants $C_1,C_2>0$, proving that $\Phi \wr \Psi$ has compression $\geq \gamma \min(\alpha, \beta)$ as desired.
\end{proof}

\noindent
We denote by $\mathrm{TS}(Y)$ the supremum of the powers $\gamma$ such that $Y$ satisfies the condition mentioned in Proposition \ref{prop:Compression}. In \cite{Qm}, we investigated the possible values taken by $\mathrm{TS}(Y)$ when $Y$ a median graph. For instance, we proved that the following statements hold:
\begin{itemize}
	\item \cite[Lemma 9.44, Corollary 9.52]{Qm} If $Y$ is an unbounded median graph, then $\mathrm{TS}(Y)$ always belong to $[1/2,2/3] \cup \{1\}$.
	\item \cite[Proposition 9.45]{Qm} If $Y$ is a uniformly locally finite median graph, then $\mathrm{TS}(Y)=1$ if and only if $Y$ is hyperbolic.
	\item \cite[Corollary 9.53]{Qm} If $Y$ is a median graph containing a cube of arbitrary large dimension, then $\mathrm{TS}(Y)=1/2$.
	\item \cite[Lemma 9.50]{Qm} $\mathrm{TS}(\mathbb{Z}^d) = d/(2d-1)$ for every $d \geq 1$.
\end{itemize}
The uniform lower bound $\mathrm{TS}(Y)$ extends easily to the general case:

\begin{lemma}\label{lem:TShalf}
For all $a,b \in Y$ and $S \subset Y$ finite, we have
$$\mu(\{a,b\} \cup S ) + |S| \geq \mathrm{TS}(a,S,b)^{1/2}.$$
\end{lemma}

\begin{proof}
Fix an enumeration $S= \{s_1, \ldots, s_r\}$. Then
$$\begin{array}{lcl} \mathrm{TS}(a,S,b) & \leq & \displaystyle d(a,s_1) + \sum\limits_{i=1}^{r-1} d(s_i,s_{i+1}) + d(s_r,b) \leq (r+1) \mathrm{diam}(\{a,b\} \cup S) \\ \\ & \leq & 2|S| \mu(\{a,b\} \cup S) \leq (\mu(\{a,b\} \cup S) + |S|)^2.\end{array}$$
\end{proof}

\begin{proof}[Proof of Theorem \ref{thm:Compression}.]
If $\alpha_1(G)=0$ or $\alpha_1(H)=0$, there is nothing to prove, so from now on we assume that $\alpha_1(G), \alpha_1(H) \neq 0$. Fix an $\epsilon>0$ and a Lipschitz embedding $\Phi : G \to X$ (resp. $\Psi : H : Y$) to an $\ell^1$-space having compression $\geq \alpha_1(G)- \epsilon$ (resp. $\geq \alpha_1(H)-\epsilon$). Following the beginning of the proof of Theorem \ref{thm:WreathCoarse}, we can assume without loss of generality that $\Phi,\Psi$ are injective. We know from Proposition \ref{prop:Compression} and Lemma \ref{lem:Lipschitz} that $\Phi \wr \Psi$ is Lipschitz and has compression $\geq \mathrm{TS}(Y) \cdot \left(\min(\alpha_1(G),\alpha_1(H)) - \epsilon\right)$, and we know from Lemma \ref{lem:TShalf} that $\mathrm{TS}(Y) \geq 1/2$. Because every median space isometrically embeds in an $\ell^1$-space, it follows that there exists a Lipschitz embedding from $G \wr H$ to an $\ell^1$-space that has compression $\geq \left( \min(\alpha_1(G), \alpha_1(H))- \epsilon \right)/2$. We conclude the proof by letting $\epsilon \to 0$.
\end{proof}

\begin{proof}[Proof of Theorem \ref{thm:Hyperbolic}.]
Fix a biLipschitz embedding $\Phi : G \hookrightarrow X$ to an $\ell^1$-space and set $\Psi = \mathrm{id}_H$. According to \cite[Proposition 9.45]{Qm}, $\mathrm{TS}(Y)=1$. Therefore, Proposition~\ref{prop:Compression} and Lemma~\ref{lem:Lipschitz} imply that $\Phi \wr \Psi$ is a biLipschitz embedding to a median space. The desired conclusion follows from the fact that every median space isometrically embeds in an $\ell^1$-space.
\end{proof}

\section{Actions on $\ell^1$-spaces}\label{section:Actions}

\noindent
Fix two discrete groups $G,H$ respectively acting on two median spaces $X,Y$, with two points $x_0 \in X, y_0 \in Y$ having trivial stabilisers. Observe that the wreath product $G \wr H$ naturally acts on the diadem product $\mathfrak{W}:= (X,x_0) \circledast Y$ by isometries via
$$(h,\psi) \cdot (C, \varphi) = (hC, \overline{\psi}(\cdot) \varphi(h^{-1} \cdot)),$$
where $\overline{\psi} : Y \to G$ is defined by $\overline{\psi}(g \cdot y_0)=\psi(g)$ for every $g \in H$ and $\overline{\psi}(y)=1$ for every $y \notin H \cdot y_0$; if we view $H$ as a subset of $Y$ by taking its image under the orbit map associated to the basepoint $y_0$ (the orbit map being an embedding since $y_0$ has trivial stabiliser), then the map $\overline{\psi}$ is naturally an extension of $\psi$. It is straightforward to verify that this defines an isometric action of $G \wr H$ on $(\mathfrak{W},\delta)$. 

\medskip \noindent
In the next two sections, we show that $G \wr H \curvearrowright \mathfrak{W}$ inherits some properties from the actions $G \curvearrowright X$ and $H \curvearrowright Y$.

\subsection{Actions with unbounded orbits}

\noindent
First, we characterise when the action of the wreath product on the diadem product, as described above, has unbounded orbits.

\begin{prop}\label{prop:UnboundedOrbits}
Let $G,H$ be two non-trivial groups acting on two median spaces $X,Y$ with two points $x_0 \in X, y_0 \in Y$ having trivial stabilisers. If $G \cdot x_0$ is unbounded or if $\mu(H \cdot y_0)$ is infinite, then $G \wr H$ acts on $(X,x_0) \circledast Y$ with unbounded orbits.
\end{prop}

\begin{proof}
First, we observe that, if $G$ acts on $X$ with unbounded orbits, then $G$ (as the subgroup of $G \wr H$ indexed by $1 \in H$) also acts on $X \circledast Y$ with unbounded orbits. Indeed, if $\varphi : Y \to X$ denotes the map always taking the value $x_0$, then 
$$g \cdot (\{y_0\}, \varphi) = \left( \{y_0\} , \ y \mapsto \left\{ \begin{array}{cl} x_0 & \text{if $y \neq y_0$} \\ g x_0 & \text{otherwise} \end{array} \right. \right),$$
hence $\delta \left( g \cdot (\{y_0\},\varphi), (\{y_0\},\varphi) \right) \geq d(x_0,g\cdot x_0)$. The desired conclusion follows. 

\medskip \noindent
Next, we observe that, if $\mu(H \cdot y_0)$ is infinite, then $\bigoplus_H G$ acts on $X \circledast Y$ with unbounded orbits. Indeed, fix a finite subset $R \subset H$ and set $S:= \{h \cdot y_0 \mid h \in R\}$. Fix a non-trivial element $g \in G$ and let $\psi : H \to G$ denote the map that is identically equal to $g$ on $R$ and identically trivial elsewhere. Notice that
$$(1,\psi) \cdot (\{y_0\}, \varphi) = \left( \{y_0\}, \ y \mapsto \left\{ \begin{array}{cl} x_0 & \text{if $y \notin S$} \\ gx_0 & \text{otherwise} \end{array} \right. \right),$$
where $\varphi : Y \to X$ is identically to $x_0$, hence $\delta \left( (1,\psi) \cdot (\{y_0\}, \varphi), (\{y_0\}, \varphi) \right) \geq 2 \mu(S)$. Because $\mu(H \cdot y_0)$ is infinite, we can choose $R$ so that $\mu(S)$ is arbitrarily large, so the desired conclusion follows.
\end{proof}

\noindent
Theorem \ref{thm:TandHaagerup} essentially follows from the combination of \cite{medianviewpoint} and Proposition \ref{prop:UnboundedOrbits}. The only point to be careful with is that our construction start with actions on median spaces having basepoints with trivial stabilisers. However, it essentially follows from \cite[Lemma 4.34]{Qm} that the assumption is not restrictive. For completeness, we reproduce the argument below.

\begin{lemma}\label{lem:modifycubing}
Let $G$ be a group acting on a median space $X_0$. Then $G$ acts on a median space $X$ containing $X_0$ so that the action $G \curvearrowright X_0$ extends to an action $G \curvearrowright X$ and $X$ contains a vertex whose stabiliser is trivial. Moreover, the action $G \curvearrowright X$ is properly discontinuous (resp. metrically proper, cocompact, with unbounded orbits) if and only if the action $G \curvearrowright X_0$ is properly discontinuous (resp. metrically proper, cocompact, with unbounded orbits) as well. 
\end{lemma}

\begin{proof}
Let $x_0 \in X_0$ be a base vertex and let $\Omega$ denote its $G$-orbit. Let $X$ be the space constructed from $X_0$ by adding one point $(x,g)$ for every $x \in \Omega$ and $g \in \mathrm{stab}(x)$, and one segment of length one between $x$ and $(x,g)$ for every $x \in \Omega$ and $g \in \mathrm{stab}(x)$. It is straightforward to verify that $X$ is a median space. 

\medskip \noindent
Now, we extend the action $G \curvearrowright X_0$ to an action $G \curvearrowright X$. For every $x \in \Omega$, fix some $h_x \in G$ such that $h_x \cdot x_0=x$. For every $g,k \in G$ and $x \in \Omega$, define
$$g \cdot (x,k)= (gx,gk h_x h_{gx}^{-1});$$
notice that
$$gkh_x h_{gx}^{-1} \cdot gx= gkh_x \cdot x_0 = gk \cdot x = g \cdot x,$$
so that $gkh_xh_{gx}^{-1} \in \mathrm{stab}(gx)$. Moreover, 
$$\begin{array}{lcl} g_1 \cdot ( g_2 \cdot (x,k)) & = & g_1 \cdot (g_2x,g_2kh_xh_{g_2x}^{-1}) \\ \\ & = & (g_1g_2x, g_1 \cdot g_2kh_xh_{g_2x}^{-1} \cdot h_{g_2x} h_{g_1g_2x}^{-1}) \\ \\ & = & (g_1g_2x, g_1g_2 k h_x h_{g_1g_2x}^{-1}) = g_1g_2 \cdot (x,k) \end{array}$$
so we have defined a group action $G \curvearrowright X$, which extends $G \curvearrowright X_0$ by construction. 

\medskip \noindent
Fixing some $x \in \Omega$, we claim that the vertex $(x,1) \in X$ has trivial stabiliser. Indeed, if $g \in G$ fixes $(x,1)$, then $(x,1)= g \cdot (x,1)= (gx, gh_xh_{gx}^{-1})$. As a consequence, $gx=x$, ie., $g \in \mathrm{stab}(x)$, so that $h_{gx}=h_x$. Therefore, our relation becomes $(x,1)=(x,g)$, hence $g=1$. 

\medskip \noindent
This proves the first assertion of our lemma. Next, it is clear that the action $G \curvearrowright X$ is properly discontinuous (resp. metrically proper, cocompact, with unbounded orbits) if and only if the action $G \curvearrowright X_0$ is properly discontinuous (resp. metrically proper, cocompact, with unbounded orbits) as well.
\end{proof}

\begin{proof}[Proofs of the first parts of Theorems \ref{thm:TandHaagerup} and \ref{thm:FWandPW}.]
Let $H$ act on the tree $T_H$ whose vertex-set is $H \cup \{H\}$ and whose edges connect every $h \in H$ to $H$. The vertex $1 \in T$ has trivial stabiliser and $\mu(H \cdot 1) = |H|$. Similarly, let $G$ act on the tree $T_G$ constructed in the same way. If $H$ is infinite, it follows from Proposition \ref{prop:UnboundedOrbits} that $G \wr H$ acts on the median space $(T_G,1) \circledast T_H$ with unbounded orbits. Therefore, $G \wr H$ does not have property (T). If $G$ does not have property (T), then according to \cite{medianviewpoint} it admits an action on a median space $X$ with unbounded orbits. According to Lemma \ref{lem:modifycubing}, we can suppose without loss of generality that $X$ contains a point $x_0$ with trivial stabiliser. We conclude from Proposition \ref{prop:UnboundedOrbits} that $G \wr H$ acts on $(X,x_0) \circledast T_H$ with unbounded orbits, and consequently that $G \wr H$ does have property (T). Conversely, if $H$ is finite, then $G \wr H$ contains a finite-index subgroup  isomorphic to a product of finitely many copies of $G$, so it follows from basic properties satisfied by (T) that $G \wr H$ has property (T) if so does $G$ (see for instance \cite{MR2415834}). 

\medskip \noindent
Thus, we have proved the first part of Theorem \ref{thm:TandHaagerup}. As a consequence of Lemma \ref{lem:Wcube}, reproducing the same argument word for word proves the first part of Theorem \ref{thm:FWandPW}.
\end{proof}

\subsection{Proper actions}

\noindent
Finally, we characterise when the action of the wreath product on the diadem product, as described at the beginning of~Section \ref{section:Actions}, is metrically proper.

\begin{prop}\label{prop:properaction}
Let $G,H$ be two discrete groups acting on two median spaces $X,Y$ with two points $x_0 \in X, y_0 \in Y$ having trivial stabilisers. If the actions $G \curvearrowright X$ and $H \curvearrowright Y$ are metrically proper, then so is the action of $G \wr H$ on $\mathfrak{W}:= (X,x_0) \circledast Y$.
\end{prop}

\begin{proof}
It is sufficient to prove that, fixing some $R \geq 0$, the set
$$F = \left\{ (h, \psi) \in G \wr H \mid \delta((h, \psi) \cdot (\{y_0 \}, \xi),(\{y_0\}, \xi)) \leq R \right\}$$
is finite, where $\xi$ denotes the map $Y \to X$ constant to $x_0$. So, by definition of $F$, an element $(h,\psi)\in G \wr H$ belongs to $F$ if and only if
$$2 \cdot \mu \left( \{y_0,hy_0\} \cup \overline{\psi} \xi \Delta \xi \right) + \sum\limits_{g \in H} d(\psi(g) \cdot x_0,x_0) \leq R.$$
If $(h,\psi)$ is such an element, in particular
$$d(y_0,hy_0) = \mu \left( \{ y_0,hy_0\} \right) \leq R,$$
and since the action $H \curvearrowright Y$ is metrically proper, it follows that $h$ can take only finitely many values. Moreover, if we denote by $\mathrm{supp}(\psi)$ the set $\{ g \in H \mid \psi(h) \neq 1 \}$, notice that $\overline{\psi} \xi \Delta \xi$ coincides with $\mathrm{supp}(\psi) \cdot y_0$. Consequently, 
$$d(y_0,sy_0) \leq \mu \left( \{y_0,hy_0\} \cup \overline{\psi} \xi \Delta \xi \right) \leq R$$
for every $s \in \mathrm{supp}(\psi)$, so that, once again because the action $H \curvearrowright Y$ is metrically proper, there are only finitely many choices for $\mathrm{supp}(\psi)$. Finally, notice that, for every $k \in \mathrm{supp}(\psi)$,
$$d(\psi(k) \cdot x_0, x_0) \leq \sum\limits_{g \in H} d(\psi(g) \cdot x_0,x_0) \leq R,$$
so that, because the action $G \curvearrowright X$ is metrically proper, $\psi(k)$ can take only finitely many values. Thus, we have proved that there are only finitely many choices on $h$ and $\psi$ in order to have $(h,\psi) \in F$. A fortiori, $F$ must be finite.
\end{proof}

\begin{proof}[Proofs of the second parts of Theorems \ref{thm:TandHaagerup} and \ref{thm:FWandPW}.]
If $G \wr H$ is a-T-menable, then clearly $G$ and $H$ are also a-T-menable. Conversely, assume that $G$ and $H$ are a-T-menable. According to \cite{medianviewpoint}, $G$ (resp. $H$) acts metrically properly on a median space $X$ (resp. $Y$); as a consequence of Lemma \ref{lem:modifycubing}, we can assume that $X$ (resp. $Y$) contains a point $x_0$ (resp. $y_0$) with trivial stabiliser. It follows from Proposition \ref{prop:properaction} that $G \wr H$ admits a metrically proper action on a median space, and we conclude from \cite{medianviewpoint} that the wreath product is a-T-menable. 

\medskip \noindent
Thus, we have proved the second part of Theorem \ref{thm:TandHaagerup}. As a consequence of Lemma \ref{lem:Wcube}, reproducing the same argument word for word proves the second part of Theorem~\ref{thm:FWandPW}.
\end{proof}

\noindent
We conclude this article by noticing that, in the context of median graphs, we are also able to construct properly discontinuous actions

\begin{thm}\label{thm:propercube}
If $G$ and $H$ are two groups acting properly discontinuously on some median graphs, then their wreath product $G \wr H$ acts properly discontinuously on a median graph as well.
\end{thm}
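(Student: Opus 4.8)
The plan is to mimic the strategy used for Theorem~\ref{thm:mpropercube}, replacing the notion of ``metrically proper'' with ``properly discontinuous'' throughout. The key point is that $\mathfrak{W}$ is already known to be a CAT(0) cube complex when $X$ and $Y$ are (Lemma~\ref{lem:Wcube}), and the action of $G \wr H$ on $\mathfrak{W}$ has been described explicitly; so the only thing that needs checking is the properness of this action in the discrete sense. After invoking Lemma~\ref{lem:modifycubing} to reduce to the case where $X$ and $Y$ carry vertices $x_0$, $y_0$ with trivial stabilisers (the lemma preserves proper discontinuity), I would show that for any vertex of $\mathfrak{W}$ its stabiliser in $G \wr H$ is finite.

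First I would fix the base wreath $(\{y_0\}, \xi)$, where $\xi$ is constant equal to $x_0$, and compute its stabiliser. An element $(h,\psi) \in G \wr H$ fixes $(\{y_0\},\xi)$ iff $h\{y_0\} = \{y_0\}$ and $\overline{\psi}(\cdot)\,\xi(h\cdot) = \xi$. The first condition forces $h y_0 = y_0$, hence $h = 1$ since $y_0$ has trivial stabiliser; the second condition then reduces to $\overline{\psi}(y) = 1$ for all $y$, equivalently $\psi(g) = 1$ for all $g$ (again using that the orbit map $H \to H\cdot y_0$ is injective and that $\psi(g)\cdot x_0 = x_0$ forces $\psi(g)=1$). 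So the stabiliser of $(\{y_0\},\xi)$ is trivial. Since $G \wr H$ acts by isometries on the graph $\mathfrak{W}$ and the action is by graph automorphisms, a standard argument (or Lemma~\ref{lem:modifycubing} applied directly to $G\wr H \curvearrowright \mathfrak{W}$) then upgrades this to proper discontinuity: the stabiliser of any vertex is conjugate into — in fact, one checks vertex stabilisers are finite directly, since a group acting on a locally finite graph with one trivial-stabiliser vertex need not have all stabilisers finite, so I would instead argue as follows.

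The cleaner route is to show the action $G\wr H \curvearrowright \mathfrak{W}$ is \emph{metrically proper} — which is exactly Proposition~\ref{prop:properaction}, already available — and then observe that on a CAT(0) cube complex with a cocompact-free vertex, or more simply, that a metrically proper action by automorphisms of a \emph{locally finite} CAT(0) cube complex is automatically properly discontinuous. Indeed $\mathfrak{W}$ is locally finite whenever $X$ and $Y$ are (the degree of a vertex $(C,\varphi)$ is governed by the number of adjacent convex subcomplexes of $C$ in $Y$ and the number of vertices of $X$ adjacent to some $\varphi(y)$, both finite in the locally finite setting), so a set of group elements moving a fixed vertex a bounded distance is contained in a finite ball, hence finite; properness of the point-stabilisers and of the more general finiteness condition for proper discontinuity follows. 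Thus the proof assembles as: (1) reduce to trivial-stabiliser basepoints via Lemma~\ref{lem:modifycubing}; (2) invoke Lemma~\ref{lem:Wcube} to get that $\mathfrak{W}$ is a CAT(0) cube complex; (3) invoke Proposition~\ref{prop:properaction} for metric properness; (4) note $\mathfrak{W}$ is locally finite to convert metric properness into proper discontinuity.

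The main obstacle I anticipate is step (4): verifying that $\mathfrak{W}$ is locally finite requires a genuine (though not deep) argument, namely bounding the number of wreaths at distance one from a given wreath $(C,\varphi)$. An edge out of $(C,\varphi)$ either changes $\varphi$ at a single point $y\in Y$ to an $X$-neighbour of $\varphi(y)$ — and $\varphi$ differs from $\xi$ only on a finite set $S$ together with the possibility of creating a new point of difference, which is controlled by local finiteness of $X$ but a priori involves \emph{all} of $Y$, so one must be slightly careful — or it changes $C$ to an adjacent convex subcomplex, of which there are finitely many by local finiteness of $Y$ and finiteness of $\mathcal H(C)$. Handling the ``changing $\varphi$'' edges correctly (they can occur at any vertex of $C$, so one needs $C$ to have finitely many vertices, which holds since $C$ is finitely generated and $Y$ is locally finite) is the delicate bookkeeping. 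An alternative that sidesteps local finiteness entirely is to combine the trivial-stabiliser computation above with the fact that $G \wr H$ acts on $\mathfrak{W}$ with the orbit of $(\{y_0\},\xi)$ being \emph{free}, and then appeal to the metric-properness of Proposition~\ref{prop:properaction} to conclude that only finitely many group elements can send $(\{y_0\},\xi)$ into any bounded set — which is already the definition of metric properness, and since the CAT(0) cube complex structure is combinatorial, metric properness with respect to $\delta$ is the same as proper discontinuity here because $\delta$ is the graph metric and balls are finite. I would present whichever of these is shortest, most likely: reduce basepoints, cite Lemma~\ref{lem:Wcube}, cite Proposition~\ref{prop:properaction}, remark that $\mathfrak{W}$ is locally finite hence metric properness gives proper discontinuity.
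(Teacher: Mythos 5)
There is a genuine gap, and it is located exactly where you lean on Proposition \ref{prop:properaction}. That proposition assumes that the actions $G \curvearrowright X$ and $H \curvearrowright Y$ are \emph{metrically proper}, whereas the hypothesis of Theorem \ref{thm:propercube} is only that they are \emph{properly discontinuous}. For actions on CAT(0) cube complexes the second property is strictly weaker (think of an infinitely generated free group acting freely on a tree: all stabilisers are trivial, so the action is properly discontinuous, yet infinitely many elements move a basepoint a bounded distance, so it is not metrically proper). So step (3) of your plan is not available, and without it the whole route collapses: you cannot obtain metric properness of $G \wr H \curvearrowright \mathfrak{W}$ from the given data, and indeed it may fail. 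Your step (4) also has two problems of its own: the theorem does not assume $X$ and $Y$ (hence $\mathfrak{W}$) to be locally finite, and the implication you want there is stated backwards --- metric properness implies proper discontinuity for free, since compact sets are bounded, so if step (3) were legitimate no local finiteness would be needed; local finiteness would only matter for the converse implication, which is not the one you need. Your opening computation (triviality of the stabiliser of $(\{y_0\},\xi)$) is correct but, as you yourself note, does not control the stabilisers of other vertices, and you then abandon the only line of argument that actually works under the stated hypotheses.

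What the paper does instead is prove directly that \emph{every} vertex stabiliser in $\mathfrak{W}$ is finite (which is what proper discontinuity amounts to for this combinatorial action). Fix a wreath $(C,\varphi)$; an element $(h,\psi)$ stabilises it iff $hC=C$ and $\overline{\psi}(\cdot)\varphi(h\cdot)=\varphi(\cdot)$. Since the convex hull of a finite vertex set in a CAT(0) cube complex is finite, proper discontinuity of $H \curvearrowright Y$ leaves only finitely many possibilities for $h$. For each such $h$, the equation forces $\psi(g)\cdot x_0=x_0$, hence $\psi(g)=1$ (triviality of the stabiliser of $x_0$), for all $g$ outside the finite set $F \cup h^{-1}F$ where $F=\{g \mid \varphi(g\cdot y_0)\neq x_0\}$; and for the finitely many remaining $g$, the element $\psi(g)$ sends one point of the finite set $\varphi(F\cup h^{-1}F)$ to another, so proper discontinuity of $G \curvearrowright X$ leaves only finitely many choices for each $\psi(g)$. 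This uses only the hypotheses as stated and no metric properness or local finiteness; to repair your proof you would need to replace steps (3)--(4) by this (or an equivalent) direct finiteness argument for vertex stabilisers.
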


\begin{proof}
Let $G$ and $H$ act properly discontinuously on median graphs $X$ and $Y$ respectively. By following Lemma \ref{lem:modifycubing} (or according to \cite[Lemma 4.34]{Qm}), we can suppose without loss of generality that there exist vertices $x_0 \in X$ and $y_0 \in Y$ with trivial stabilisers. We deduce from Lemma \ref{lem:Wcube} that the wreath product $G\wr H$ acts on the median graph $\mathfrak{W}:= (X,x_0) \circledast Y$. We claim that this action is properly discontinuous, which amounts to saying that vertex-stabilisers of $\mathfrak{W}$ are finite.

\medskip \noindent
So let $(C, \varphi) \in \mathfrak{W}$ be a wreath. An element $(h,\psi) \in G \wr H$ belongs to its stabiliser if and only if
$$(C, \varphi)= (h,\psi) \cdot (C, \varphi)= (hC, \overline{\psi}(\cdot)\varphi(h^{-1} \cdot)),$$
ie., $hC=C$ and $\overline{\psi}(\cdot)\varphi(h^{-1} \cdot)= \varphi(\cdot)$. In a median graph, the convex hull of a finite set must be finite, so that, because the action $H \curvearrowright Y$ is properly discontinuous, there may exist only finitely many $h \in H$ satisfying $hC=C$. From now on, suppose that $h \in H$ is fixed, and satisfies $hC=C$. Notice that the condition $\overline{\psi}(\cdot)\varphi(h^{-1} \cdot)= \varphi(\cdot)$ implies that $\psi(g) \cdot \varphi(h^{-1}g \cdot y_0) = \varphi(g \cdot y_0)$ for every $g \in G$. As a consequence, if we set $F= \{ g \in G \mid \varphi(g \cdot y_0) \neq x_0 \}$, then, for every $g \notin F \cup h F$, one has $\psi(g) \cdot x_0=x_0$, so that $\psi(g)=1$ since the stabiliser of $x_0$ is trivial. On the other hand, $F$ is finite because
$$F \subset \bigcup \left\{ \mathrm{stab}_G(y) \mid \varphi(y) \neq x_0 \right\}$$
and because the action $G \curvearrowright X$ is properly discontinuous, so we have only finitely many choices for $\mathrm{supp}(\psi) = \{ g \in G \mid \psi(g) \neq 1 \}$. If $g \in F \cup hF$, then there exist some $y_1,y_2 \in \Phi := \varphi(F \cup hF)$ such that $\psi(g) \cdot y_1=y_2$; since $\Phi$ is finite and that the action $G \curvearrowright X$ is properly discontinuous, we deduce that we have only finitely many choices for $\psi(g)$. Thus, we have proved that there exist only finitely many $h \in H$ and $\psi \in G^H$ such that $(h,\psi)$ belongs to the stabiliser of $(C, \varphi)$, which precisely means that this stabiliser must be finite. This concludes the proof.
\end{proof}

\addcontentsline{toc}{section}{References}

\bibliographystyle{alpha}
{\footnotesize\bibliography{WreathMedian}}

\def\polhk#1{\setbox0=\hbox{#1}{\ooalign{\hidewidth
  \lower1.5ex\hbox{`}\hidewidth\crcr\unhbox0}}}
\begin{thebibliography}{BdlHV08}

\bibitem[AGS06]{MR2271228}
G.~Arzhantseva, V.~Guba, and M.~Sapir.
\newblock Metrics on diagram groups and uniform embeddings in a {H}ilbert
  space.
\newblock {\em Comment. Math. Helv.}, 81(4):911--929, 2006.

\bibitem[ANP09]{MR2439428}
T.~Austin, A.~Naor, and Y.~Peres.
\newblock The wreath product of {$\Bbb Z$} with {$\Bbb Z$} has {H}ilbert
  compression exponent {$\frac{2}{3}$}.
\newblock {\em Proc. Amer. Math. Soc.}, 137(1):85--90, 2009.

\bibitem[BdlHV08]{MR2415834}
B.~Bekka, P.~de~la Harpe, and A.~Valette.
\newblock {\em Kazhdan's property ({T})}, volume~11 of {\em New Mathematical
  Monographs}.
\newblock Cambridge University Press, Cambridge, 2008.

\bibitem[BMSZ19]{GeometryLamp}
F.~Baudier, P.~Motakis, T.~Schlumprecht, and Zs\'ak.
\newblock On the bi-{L}ipschitz geometry of lamplighter graphs.
\newblock {\em arXiv:1902.07098, to appear in Discrete Comput. Geom.}, 2019.

\bibitem[Bou86]{MR880292}
J.~Bourgain.
\newblock The metrical interpretation of superreflexivity in {B}anach spaces.
\newblock {\em Israel J. Math.}, 56(2):222--230, 1986.

\bibitem[BZ21]{MR4199729}
J.~Brieussel and T.~Zheng.
\newblock Speed of random walks, isoperimetry and compression of finitely
  generated groups.
\newblock {\em Ann. of Math. (2)}, 193(1):1--105, 2021.

\bibitem[CDH10]{medianviewpoint}
I.~Chatterji, C.~Dru{\c{t}}u, and F.~Haglund.
\newblock Kazhdan and {H}aagerup properties from the median viewpoint.
\newblock {\em Advances in Mathematics}, 225:882--921, 2010.

\bibitem[Che00]{mediangraphs}
V.~Chepoi.
\newblock Graphs of some {$\rm CAT(0)$} complexes.
\newblock {\em Adv. in Appl. Math.}, 24(2):125--179, 2000.

\bibitem[CMV04]{measuredwalls}
P.-A. Cherix, F.~Martin, and A.~Valette.
\newblock Spaces with measured walls, the {H}aagerup property and property
  ({T}).
\newblock {\em Ergodic Theory Dynam. Systems}, 24(6):1895--1908, 2004.

\bibitem[CSV12]{Haagerupwreath}
Y.~Cornulier, Y.~Stalder, and A.~Valette.
\newblock Proper actions of wreath products and generalizations.
\newblock {\em Trans. Amer. Math. Soc.}, 364:3159--3184, 2012.

\bibitem[Gen17]{Qm}
A.~Genevois.
\newblock Cubical-like geometry of quasi-median graphs and applications to
  geometric group theory.
\newblock {\em PhD Thesis, arXiv:1712.01618}, 2017.

\bibitem[Li10]{MR2644886}
S.~Li.
\newblock Compression bounds for wreath products.
\newblock {\em Proc. Amer. Math. Soc.}, 138(8):2701--2714, 2010.

\bibitem[LS21]{TandFWforWreath}
P.-H. Leemann and G.~Schneeberger.
\newblock Property {FW} and wreath products of groups: a simple approach using
  {S}chreier graphs.
\newblock {\em arXiv:2101.03817}, 2021.

\bibitem[Now06]{MR2214573}
P.~Nowak.
\newblock On coarse embeddability into {$l_p$}-spaces and a conjecture of
  {D}ranishnikov.
\newblock {\em Fund. Math.}, 189(2):111--116, 2006.

\bibitem[NP08]{MR2439557}
A.~Naor and Y.~Peres.
\newblock Embeddings of discrete groups and the speed of random walks.
\newblock {\em Int. Math. Res. Not. IMRN}, pages Art. ID rnn 076, 34, 2008.

\bibitem[NP11]{MR2783928}
A.~Naor and Y.~Peres.
\newblock {$L_p$} compression, traveling salesmen, and stable walks.
\newblock {\em Duke Math. J.}, 157(1):53--108, 2011.

\bibitem[Par92]{MR1062874}
W.~Parry.
\newblock Growth series of some wreath products.
\newblock {\em Trans. Amer. Math. Soc.}, 331(2):751--759, 1992.

\bibitem[Rol98]{Roller}
M.~Roller.
\newblock Pocsets, median algebras and group actions; an extended study of
  dunwoody's construction and sageev's theorem.
\newblock {\em dissertation}, 1998.

\bibitem[SV07]{lamplighterSV}
Y.~Stalder and A.~Valette.
\newblock Wreath products with the integers, proper actions and {H}ilbert space
  compression.
\newblock {\em Geom. Dedicata}, 124:199--211, 2007.

\bibitem[Tes11]{MR2803851}
R.~Tessera.
\newblock Asymptotic isoperimetry on groups and uniform embeddings into
  {B}anach spaces.
\newblock {\em Comment. Math. Helv.}, 86(3):499--535, 2011.

\bibitem[VdV93]{vantheory}
M.~Van~de Vel.
\newblock {\em Theory of Convex Structures}.
\newblock North-Holland, Amsterdam, 1993.

\bibitem[Yu00]{MR1728880}
G.~Yu.
\newblock The coarse {B}aum-{C}onnes conjecture for spaces which admit a
  uniform embedding into {H}ilbert space.
\newblock {\em Invent. Math.}, 139(1):201--240, 2000.

\end{thebibliography}

\Address

\end{document}